\def\ZZ {{\mathbb Z}}
\def\NN {{\mathbb N}}
\def\RR {{\mathbb R}}
\def\CC {{\mathbb C}}
\def\PP {{\mathbb P}}
\def\Si{\Sigma}
\def\De{\Delta}
  \def\cG{{\cal G}} \def\cM{{\cal M}} 
\def\cB{{\cal B}}    
   \def\cP{{\cal P}} 
\def\cE{{\cal E}}    \def\cW{{\cal
W}}
\def\cF{{\cal F}}  \def\cL{{\cal L}}  \def\cX{{\cal
X}}
    \def\cW{{\cal W}}
\newtheorem{theo}{Theorem}
\newtheorem{theor}{Theorem}[section]
\newtheorem{lemm}[theor]{Lemma}
\newtheorem{coro}[theor]{Corollary}
\newtheorem{corol}{Corollary}[]
\newtheorem{prop}[theor]{Proposition}
\newtheorem{rema}[theor]{Remark}
\newtheorem{defi}[theor]{Definition}
\renewcommand{\div}{\operatorname{div}}
\newtheorem{exam}{Example}
\newenvironment{demo}{\smallskip \noindent{\bf Proof: }}{\hfill$\Box$\medskip}
\newenvironment{dem}{\smallskip \noindent{\bf Proof: }}{\hfill$\Box$\medskip}
\newenvironment{demt}{\smallskip \noindent{\bf Proof of Theorem 1: }} {\hfill$\square$\medskip}
\newenvironment{demt2}{\smallskip \noindent{\bf Proof of Theorem 2: }} {\hfill$\square$\medskip}
\newenvironment{demt3}{\smallskip \noindent{\bf Proof of Theorem 3: }} {\hfill$\square$\medskip}
\newenvironment{demt4}{\smallskip \noindent{\bf Proof of Theorem 4: }} {\hfill$\square$\medskip}
\title{ Foliated Hyperbolicity and Foliations with Hyperbolic Leaves}
\author{Christian Bonatti\thanks{Partially supported by CNRS and LAISLA.} \and  Xavier G\'omez-Mont\thanks{Partially supported by CONACYT 134081 and LAISLA.} \and Matilde Mart\'{\i}nez\thanks{Partially supported by CIMAT and ANII-FCE 2007-106.}}
\begin{document}

\maketitle

\begin{abstract} Given a lamination in a compact space and a laminated vector field $X$ which is hyperbolic when
restricted to the leaves of the lamination,
 we   distinguish a class of $X$-invariant probabilities
that describe the behaviour of   almost every $X$-orbit in every leaf, that we call   u-Gibbs states.
We apply this to the case of foliations in compact manifolds having leaves with negative curvature,
using the foliated hyperbolic vector field on the unit tangent bundle to the foliation generating the leaf geodesics.
When the Lyapunov exponents of such an ergodic u-Gibbs states are negative, it is an SRB-measure
(having a positive Lebesgue basin of attraction).
When the foliation is by hyperbolic leaves, this class of probabilities coincide with the classical
harmonic measures introduced by L. Garnett.
If furthermore the foliation is transversally conformal and does not admit a
transverse invariant measure we show that  the ergodic u-Gibbs states are finitely many, supported
each in one minimal set  of the foliation,  have negative Lyapunov exponents and the union of their basins of attraction has
full Lebesgue measure. The leaf geodesics emanating from a point have a proportion
 whose asymptotic statistics is described by each of these ergodic u-Gibbs states, giving rise to continuous visibility functions of the attractors.
Reversing time, by considering $-X$, we obtain the existence of the same number of repellors of the foliated geodesic flow having the same
harmonic measures as projections to $M$. In the case of only 1 attractor, we obtain a North to South pole dynamics.
\end{abstract}

\section{Introduction}

\subsection{Foliated hyperbolicity versus partial hyperbolicity}

Hyperbolicity is a fundamental tool for understanding chaotic behavior of dynamical systems,
even for non hyperbolic systems. The aim of this paper is to show  that ideas from
partially hyperbolic dynamics may be useful in the study of foliations, in particular when the leaves
carry a negatively curved, or better a hyperbolic, Riemannian metric. The foliated geodesic flow is then hyperbolic
along the leaves. However, we make no   assumption on the normal direction of the foliation,
so that the foliated geodesic flow does not need to be partially hyperbolic: the directions along the leaves
are hyperbolic, the normal directions may be anything.

This behavior resembles the notion of \emph{partially hyperbolic diffeomorphism} $f$,
where the tangent bundle of the whole manifold splits in a sum of three $Df$-invariant bundles
$E^s\oplus E^c\oplus E^u$, the stable and unstable bundles being uniformly contracted and expanded,
respectively. In the center bundle, no expansion nor contraction are required: the only requirement
is that the center bundle is dominated by the stable and the unstable ones, meaning that no contraction
in the center bundle is stronger than the contraction in the stable one and no expansion is
stronger than the expansion in the unstable bundle. Partial hyperbolicity implies the existence
of stable and unstable foliations, tangent to $E^s$ and $E^u$, respectively. Furthermore
a fundamental tool for the ergodic theory of partially hyperbolic dynamics is the fact that these foliations
are absolutely continuous with respect to Lebesgue, assuming that the diffeomorphism $f$ is $C^2$,
and distinguishing certain probabilities, called u-Gibbs states.

The foliated geodesic flow of a foliation by  leaves with negative curvature is not necessarily
partially hyperbolic: the hyperbolic behavior in the leaves does not dominate the normal behavior,
that is, the holonomy along the geodesics. However, this lack of domination is  substituted
by the \emph{ a priori} existence of a foliation carrying the flow, allowing to recover the existence
of the stable and unstable foliations tangent to the hyperbolic stable and unstable manifolds,
their absolute continuity with respect to Lebesgue, as
well as distinguishing the corresponding  u-Gibbs states . Thus we will define the notion of \emph{foliated hyperbolicity}.

\subsection{Foliated hyperbolicity}

In Section 2  we consider a lamination $\cF$ in the compact space $M$ and an
\emph{$\cF$-hyperbolic vector field}
$X$: a $C^r$, $r\geq 1$, vector field  in each leaf that varies continuously for the $C^r$ topology  and
is uniformly hyperbolic when restricted to any leaf of $\cF$ (Definition~\ref{d.F-hyp}).
The behavior of an $\cF$-hyperbolic vector field is very close to the one of a continuous
family of Anosov vector fields, and Section 2 states classical properties of the hyperbolic dynamics which can
be straightfowardly generalized to $\cF$-hyperbolic vector fields.  Thus the leaves of $\cF$ are foliated by  stable and unstable
foliations ($\cW^s_{\cF,X}$, $\cW^u_{\cF,X}$)  whose leaves are
$C^r$ and vary continuously for the $C^r$ topology   (Theorem~\ref{t.invariant-manifolds}).
   The central unstable foliation
$\cW^{cu}_{\cF,X}$ is generated by $\cW^u_{\cF,X}$ and $X$, its leaves are contained in the leaves of
$\cF$ and since for backward time $X_t$ is contracting,  it is formed of
$X$-orbits which share the same asymptotic past.

For $X$ an $\cF$-hyperbolic vector field,   we introduce in Definition $3.5$ the $u$-Gibbs states of $(\cF,X)$,
which  are $X$-invariant probability measures on $M$ which are absolutely continuous along the leaves
of the unstable foliation of $X$ in $\cF$ and have a prescribed positive density in each unstable leaf
(coming from the distant past).
In a similar way to the partially hyperbolic setting (see \cite[Theorem 11.16]{BDV}),  we prove in Section 3
that the statistical behavior of most of the orbits of $X$ is described by  the $u$-Gibbs states:

\begin{theo}\label{uGibb2}
Let $(M,\cF)$ be a compact metrizable space with a $C^r$-lamination and $X$ an $\cF$-hyperbolic vector field. Then
there exists a set $E \subset M$ intersecting every leaf of the center unstable foliation $\cW^{cu}_{\cF,X}$ of $X$ in $\cF$
on a full Lebesgue measure subset,
such that for any $ p \in E$, every accumulation measure $\nu$ of
$$\nu_{T, p}:=X_*(\cdot,p) \vert_{[0,T]} \frac{dt}{T}$$
with $T \longrightarrow \infty$ is a u-Gibbs state.
\end{theo}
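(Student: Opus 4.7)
The plan is to adapt the standard argument from the partially hyperbolic setting to the $\cF$-hyperbolic context. The two main inputs from Section 2 are the absolute continuity of $\cW^u_{\cF,X}$ inside the leaves of $\cW^{cu}_{\cF,X}$, a consequence of Theorem~\ref{t.invariant-manifolds}, and a bounded distortion estimate for the unstable Jacobian of $X_t$ along a common unstable plaque. The first step is to cover $M$ by finitely many foliation boxes adapted simultaneously to $\cF$, to $\cW^u_{\cF,X}$, and to $\cW^{cu}_{\cF,X}$; it then suffices to show that for each such box $B$ and each $cu$-plaque $D\subset B$, a subset of $D$ of full cu-Lebesgue measure lies in $E$.

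Next, for a small unstable plaque $W \subset D$ I would normalize its leaf Lebesgue measure $m^u_W$ to a probability and consider
\begin{equation*}
\mu^W_T \;=\; \frac{1}{T}\int_0^T (X_t)_* m^u_W \, dt.
\end{equation*}
By the bounded distortion property of the unstable Jacobian, $(X_t)_* m^u_W$ is absolutely continuous along the long unstable plaque through $X_t(W)$ with density uniformly comparable to the canonical u-Gibbs density. A standard diagonal extraction then shows that every weak-$*$ accumulation point of $\mu^W_T$ as $T \to \infty$ is a u-Gibbs state in the sense of Definition 3.5.

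The identity $\mu^W_T = \int_W \nu_{T,p} \, dm^u_W(p)$ presents $\mu^W_T$ as the $m^u_W$-average of the orbital empirical measures along $W$. Since the u-Gibbs property is defined through the disintegration of a measure along local unstable plaques and the set of u-Gibbs states is weak-$*$ closed and convex, a Lebesgue differentiation / Fubini argument applied to a countable dense family of continuous test functions and to a shrinking family of unstable subplaques should give that, for $m^u_W$-almost every $p \in W$, every weak-$*$ accumulation point of $\nu_{T,p}$ is a u-Gibbs state. Absolute continuity of $\cW^u_{\cF,X}$ inside $\cW^{cu}_{\cF,X}$ then lets one disintegrate the cu-Lebesgue measure on $D$ along a local central transversal into measures each equivalent to some plaque measure $m^u_W$; Fubini promotes the previous statement to a full cu-Lebesgue subset of $D$, and taking the union over the finitely many boxes delivers the set $E$.

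The main obstacle is the step that lifts the information about the averaged measures $\mu^W_T$ to a statement that holds orbit-by-orbit for $m^u_W$-almost every starting point: the fact that an average of measures accumulates only on u-Gibbs states does not, by itself, force each individual empirical average $\nu_{T,p}$ to do so. In the partially hyperbolic setting this is handled by exploiting absolute continuity together with a Lebesgue differentiation argument applied to shrinking unstable subplaques; transplanted to the $\cF$-hyperbolic setting the same strategy should work once Theorem~\ref{t.invariant-manifolds} supplies the required absolute continuity of $\cW^u_{\cF,X}$ inside $\cW^{cu}_{\cF,X}$ and the bounded distortion estimate is in force, but verifying that the lamination structure does not interfere with the differentiation argument is where the technical work concentrates.
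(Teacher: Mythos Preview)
Your proposal correctly identifies the overall architecture (work in a single unstable plaque, then Fubini over a $cu$-chart), and you are right that the averaged measures $\mu^W_T$ accumulate only on u-Gibbs states; this is exactly Proposition~\ref{uGibb1} in the paper. However, the step you flag as ``the main obstacle'' is a genuine gap that your outline does not close. Knowing that $\mu^W_T=\int_W \nu_{T,p}\,dm^u_W(p)$ accumulates on u-Gibbs states for every small subplaque $W$ does not imply, via Lebesgue differentiation, that $\nu_{T,p}$ itself does for almost every $p$: to pass from $\lim_{W\to p}\mu^W_T(\varphi)=\nu_{T,p}(\varphi)$ to a statement about accumulation points as $T\to\infty$ you must exchange the limits $W\to p$ and $T\to\infty$, and nothing in your argument justifies this. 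Different points $p$ may have different divergent subsequences $T_n$, and shrinking $W$ does not synchronize them.

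The paper does \emph{not} try to differentiate Proposition~\ref{uGibb1}. Instead it proves a quantitative, pointwise large-deviation estimate (Proposition~\ref{prop1}): for any small disc $A$ far from the boundary of a reference unstable plaque $B$, the set of $x\in D$ for which $\nu_{T,x}$ gives $X_{[0,1]}(A\times Y)$ mass exceeding a fixed multiple of $m_B(A)/m_B(B)$ has $m_D$-measure at most $e^{-cT}$. This exponential bound is obtained from a combinatorial control on return times (Lemma~\ref{l.proportion}) together with Stirling's formula, exactly as in \cite[Lemmas 11.19--11.20]{BDV}. Borel--Cantelli then gives, for $m_D$-almost every $x$, the required upper bound on $\nu(X_{[0,1]}(A\times Y))$ for every accumulation point $\nu$ and every $A$ in a countable generating family, which forces $\nu$ to be a u-Gibbs state. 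The missing idea in your proposal is precisely this exponential-in-$T$ estimate on the measure of bad points; without it, the passage from averaged behavior to almost-sure behavior does not go through.
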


If furthermore $\cF$ is a  $C^1$ foliation (that is, if the transverse structure of $\cF$ is $C^1$ manifold),
 the linear holonomy cocycle over the flow of $X$ is well
defined on the normal bundle to $\cF$.
 Oseledets' theorem can then be applied to every $X$-invariant ergodic probability, defining \emph{the normal
 Lyapunov exponents of the measure}. They describe the exponential rate of growth of the
holonomy of the foliation along most of the $X$-orbits contained in the
support of the measure.

When the vector field is not just $C^2$ in the leaves, but $C^2$ in the ambient manifold, Pesin theory can be applied to fully
describe the ergodic attractors of the flow. Recall that an $X$-invariant measure $\mu$ is an SRB-measure if its basin of attraction has positive Lebesgue measure. In section 5 we prove:

\begin{theo}
\label{main_theorem}
Let $M$ be a compact manifold with a  smooth foliation $\cF$ and $X$ an  $\cF$-hyperbolic vector field which is $C^2$ on $M$, then:

1) If $\mu$  is an ergodic u-Gibbs state for $X$
with normal Lyapunov exponents which are all negative, then $\mu$ is an SRB-measure.

2) If for every  ergodic u-Gibbs state for $X$
the normal Lyapunov exponents are all negative, then  there are only a finite number of ergodic u-Gibbs states, each being an SRB-measure
and the union of their basins of attraction has full Lebesgue measure.
\end{theo}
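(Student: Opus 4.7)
The plan is to apply Pesin theory to the flow of $X$ as a $C^2$ vector field on the ambient manifold $M$, using the negativity of all normal Lyapunov exponents to promote the leafwise hyperbolicity of $X$ to full nonuniform hyperbolicity of every ergodic u-Gibbs state on $M$. At $\mu$-almost every $x$, the Oseledets splitting of $TM$ then decomposes as an extended stable bundle $\widehat{E}^s(x) := E^s_{\cF,X}(x) \oplus N(x)$, where $N(x)$ collects the Oseledets directions in the normal bundle (all of negative exponent), together with the flow direction $\RR \cdot X(x)$ and an unstable bundle that coincides with $E^u_{\cF,X}(x)$. Since $X$ is $C^2$ on $M$, the Pesin stable manifold theorem furnishes strong stable manifolds $W^{ss}(x)$ tangent to $\widehat{E}^s(x)$, and the resulting Pesin strong stable foliation is absolutely continuous; the Pesin unstable manifolds coincide with local $\cW^u_{\cF,X}$-plaques.

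For part (1), the u-Gibbs property says precisely that the conditional measures of $\mu$ on local unstable manifolds are absolutely continuous with the prescribed positive density, hence equivalent to Lebesgue on these manifolds. By Birkhoff's ergodic theorem, forward time averages converge to $\mu$ on a set $R$ of full $\mu$-measure, and two points on the same $W^{ss}$-leaf share forward time averages since they are asymptotic in $M$. Therefore the basin of attraction of $\mu$ contains the $W^{ss}$-saturation of $R$. A Fubini argument combining (i) positivity of the u-Gibbs density along unstable plaques and (ii) absolute continuity of the strong stable foliation produces a set of positive Lebesgue measure inside this basin, so $\mu$ is an SRB-measure.

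For part (2), every ergodic u-Gibbs state is SRB by part (1). Finiteness follows from a Pesin-block argument: each ergodic u-Gibbs state gives positive measure to some Pesin block on which local unstable plaques have uniform size, the u-Gibbs density is uniformly bounded below on a subset of positive conditional measure, and the Jacobians of the strong stable holonomies are uniformly bounded; these data produce a uniform lower bound on the Lebesgue measure of each basin, and since distinct basins are pairwise disjoint modulo Lebesgue-null sets, only finitely many can exist. To show that the union of basins has full Lebesgue measure, invoke Theorem \ref{uGibb2}: Lebesgue-almost every point of every $\cW^{cu}_{\cF,X}$-leaf has forward time averages accumulating only on u-Gibbs states, and by ergodic decomposition these averages converge to an ergodic u-Gibbs state, placing the point in one of the finitely many basins. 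Absolute continuity of the Pesin strong stable foliation then propagates this full-measure conclusion from the cu-leaves to $M$.

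The main obstacle is the finiteness step: it requires the quantitative Pesin-theoretic estimates (uniformity of unstable plaque size, uniform bounds on u-Gibbs densities, and uniform bounds on the strong-stable holonomy Jacobians) to be carried out on a single Pesin block of positive measure, in order to extract a universal lower bound on basin volumes and contradict the finiteness of Lebesgue measure on $M$. The remainder of the argument is a rather standard application of Pesin theory once the Oseledets decomposition has been identified with the leafwise-plus-normal splitting and the absolute continuity of the leafwise unstable foliation (the u-Gibbs property) has been recognised as absolute continuity along the Pesin unstable manifolds.
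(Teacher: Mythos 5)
Your part (1) is essentially the paper's argument: identify the Oseledets splitting of $TM$ as leafwise stable $\oplus$ flow $\oplus$ leafwise unstable $\oplus$ normal directions, observe that all exponents outside $E^u_{\cF,X}$ are nonpositive, apply Pesin's stable manifold theorem and the absolute continuity of the resulting stable lamination, and conclude via the Hopf-type Fubini argument (the paper simply cites Pugh--Shub's ergodic attractor theorem for this last step, which is the argument you sketch). That part is fine.

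Part (2) contains a genuine gap, and it sits exactly at the point the paper singles out as the difficulty. You claim that ``each ergodic u-Gibbs state gives positive measure to some Pesin block \dots\ these data produce a uniform lower bound on the Lebesgue measure of each basin.'' But the Pesin block constants --- in particular the size of the Pesin stable manifolds in the normal direction and the distortion of the stable holonomies --- depend on the measure: for each $\mu_i$ you get \emph{some} $\delta_i>0$ and hence a basin of Lebesgue measure at least $c_i>0$, but nothing prevents $c_i\to 0$ along a sequence of distinct ergodic u-Gibbs states, so disjointness of basins alone does not force finiteness. This is precisely why the paper defers to Bonatti--Viana: there the uniformity is genuinely established (a definite proportion of \emph{every} unstable disk carries Pesin stable disks of uniform size, proved via hyperbolic times and using that the transverse exponents are negative, uniformly over the compact set of u-Gibbs states), and finiteness then follows from the Hopf argument showing that two ergodic u-Gibbs states with accumulating supports would have intersecting basins. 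You name this obstacle yourself but then treat it as routine; it is the content of the proof, not a detail. A second, smaller gap: in the full-measure step you pass from ``the time averages of a.e.\ point accumulate on u-Gibbs states'' (Theorem~1) to ``they converge to an \emph{ergodic} u-Gibbs state by ergodic decomposition.'' Accumulation on a compact convex set does not give convergence, and convergence to a u-Gibbs state does not give ergodicity of the limit; the correct route is again through absolute continuity: a.e.\ point of a $\cW^{cu}$-leaf has negative transverse exponents (by semicontinuity of the top exponent over the compact set of u-Gibbs states), hence a Pesin stable manifold, which by absolute continuity meets the basin of one of the finitely many $\mu_i$ that already fill Lebesgue-a.e.\ point of the unstable disks.
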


\subsection{Laminations by negatively curved leaves}
If one considers laminations $\cF$ by leaves with a $C^3$-Riemannian metric along the leaves of $\cF$ with leafwise strictly negative curvature, the laminated geodesic flow is a flow in the unit tangent bundle $T^1\cF$ of the lamination, which is uniformly $C^2$ and hyperbolic in the leaves. Therefore the behavior of most of the foliated geodesics is described by the $u$-Gibbs states of the geodesic flow:

\begin{corol}
Let $M$ be a compact Riemannian manifold with a foliation $\cF$, such that the metric is $C^3$ and its restriction to every leaf has negative sectional curvature. Denote by $X$ the generator of the foliated geodesic flow on $T^1\cF$.

1) Let $\mu$ be an ergodic u-Gibbs state for $X$
with normal Lyapunov exponents which are all negative, then $\mu$ is an SRB-measure  for the foliated geodesic flow.

2) If all ergodic u-Gibbs states for $X$
have all their normal Lyapunov exponents negative, then there are only a finite number of ergodic u-Gibbs states, each being an SRB-measure
and the union of their basins of attraction has full Lebesgue measure in $T^1\cF$.

\end{corol}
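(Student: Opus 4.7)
The plan is to recognize the corollary as a direct application of Theorem~\ref{main_theorem} to the triple consisting of the compact manifold $T^1\cF$, its natural foliation by unit tangent bundles of leaves of $\cF$, and the foliated geodesic spray $X$. The verification splits into three points: first, that $T^1\cF$ is a compact smooth manifold carrying a smooth foliation; second, that $X$ is $C^2$ on $T^1\cF$; and third, that $X$ is $\cF$-hyperbolic in the sense of Definition~\ref{d.F-hyp}. Once these are in place, items~(1) and~(2) of the corollary read off verbatim from items~(1) and~(2) of Theorem~\ref{main_theorem}.

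The first two points are essentially bookkeeping once the setup is fixed. Since $\cF$ is a smooth foliation of the compact manifold $M$, each leaf $L$ is an immersed smooth submanifold, and the union $T^1\cF = \bigsqcup_{L\in\cF} T^1 L$ is naturally a compact subset of $TM$ foliated by the unit tangent bundles $T^1 L$ with the same transverse regularity as $\cF$. The $C^3$ Riemannian metric on $M$ restricts to a $C^3$ metric on each leaf, and the leaf geodesic spray is a $C^2$ expression in the Christoffel symbols (first derivatives of the metric); hence $X$ is $C^2$ on $T^1\cF$ and varies continuously in the $C^2$ topology from leaf to leaf.

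The third point is the heart of the proof, and I would settle it by a compactness argument. The set of tangent $2$-planes to the leaves of $\cF$ is a closed, hence compact, subset of the Grassmannian bundle of $2$-planes in $TM$. The leafwise sectional curvature is a continuous function on this set, strictly negative on every element by hypothesis; by compactness there exist constants $0 < a \le b$ such that the leafwise sectional curvature lies in $[-b^2,-a^2]$ everywhere. Classical Anosov--Sinai theory for geodesic flows on negatively curved Riemannian manifolds then yields uniform hyperbolicity of $X$ on each $T^1 L$ with constants depending only on $a$ and $b$. Combined with the continuous transverse variation of $X$, this is exactly the $\cF$-hyperbolicity demanded by Definition~\ref{d.F-hyp}, and Theorem~\ref{main_theorem} then delivers both conclusions.

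The only nontrivial step, and thus the expected main obstacle, is the passage from \emph{pointwise} strict negativity of the curvature on each leaf to \emph{uniform} pinching across all leaves, since the leaves are typically noncompact. The argument relies entirely on the compactness of the ambient manifold $M$ together with the continuity of the curvature tensor; it would fail on a noncompact ambient space even if every leaf were strictly negatively curved. Once uniform pinching is secured, nothing further is needed beyond invoking Theorem~\ref{main_theorem}.
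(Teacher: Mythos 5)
Your proposal is correct and follows essentially the same route as the paper, which states Corollary~1 as an immediate application of Theorem~\ref{main_theorem} once one observes (as in Section~4) that the foliated geodesic flow on $T^1\cF$ is uniformly $C^2$ and foliated hyperbolic. Your compactness argument giving uniform curvature pinching $-b^2\le K\le -a^2$, and hence uniform leafwise hyperbolicity via Anosov's theory for (complete) pinched negatively curved leaves, is exactly the justification the paper leaves implicit.
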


It is natural to consider the projection $p_*$ on the manifold of these $u$-Gibbs measures defined in the unit tangent bundle of the foliation.  One gets in this way a well defined class of measures absolutely continuous with respect to Lebesgue inside the leaves, and with strictly positive density (so that their support consists of entire leaves, see Theorem \ref{variable_curvature}).

For case 2 in Corollary 1, we define in section 6 the {\em visibility function}
$f_i:M \longrightarrow [0,1]$ which associates to a point $p$ of $M$ the
spherical Lebesgue measure in $T^1_p(M)$ of the set of directions at $p$ that correspond to geodesics through $p$ whose forward statistics tends to the ergodic u-Gibbs state $\mu_i$. We show:

\begin{theo}
\label{visibility_functions} Under the hypotheses of  part 2 of Corollary 1, the visibility function $f_i$ is continuous in $M$ and the value of $f_i$ is $1$ on the support  of the measure $p_*\mu_i$.
\end{theo}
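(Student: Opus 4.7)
The plan is to combine the stable-saturation properties of $B(\mu_i)$ with the absolute continuity of the relevant stable laminations, deriving both continuity of $f_i$ and the value-one statement from a single geometric picture. The starting point is that $B(\mu_i)$ is, modulo a Lebesgue-null set, saturated by the center-stable foliation $\cW^{cs}_{\cF,X}$: two points on the same center-stable leaf are forward-asymptotic and hence share all Birkhoff limits. Moreover, since by hypothesis all normal Lyapunov exponents of $\mu_i$ are negative, Pesin theory produces at every $\mu_i$-regular point $v \in T^{1}\cF$ a smooth transverse stable manifold $W^{ss,\perp}(v)$ complementing $T^{1}L$ (where $L$ is the leaf of $\cF$ through $p(v)$) inside $T^{1}\cF$; points on $W^{ss,\perp}(v)$ are forward-asymptotic to the orbit of $v$, so they also lie in $B(\mu_i)$.

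For continuity of $f_i$ within a single leaf $L$, the geodesic flow on $T^{1}L$ is leafwise Anosov and $B(\mu_i)\cap T^{1}L$ is $\cW^{s}_{\cF,X}$-saturated. The fibers $T^{1}_{p}L$, $p\in L$, are uniformly transverse to this stable foliation, so stable holonomy yields diffeomorphisms $h_{p,q}\colon T^{1}_{p}L\to T^{1}_{q}L$ whose Jacobians (Poisson-type kernels from the leafwise hyperbolic geometry) depend smoothly on $(p,q)$ and tend to $1$ as $q\to p$. Since $h_{p,q}$ preserves $B(\mu_i)$, $f_i$ is smooth along each leaf. For continuity across leaves, I would glue the leafwise stable holonomy with the transverse Pesin stable manifolds $W^{ss,\perp}(\cdot)$: for $p\in L$ and $p'\in L'$ close, Pesin stable manifolds issued from $\mu_i$-regular points of $B(\mu_i)\cap T^{1}_{p}L$ form an absolutely continuous lamination hitting $T^{1}L'$ with Jacobians uniformly close to $1$; a subsequent leafwise stable holonomy inside $L'$ transports the result onto $T^{1}_{p'}L'$, and a Fubini-type comparison yields $|f_i(p')-f_i(p)|\to 0$ as $p'\to p$.

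For the value-one claim, let $p\in\text{supp}(p_*\mu_i)$. Since $\text{supp}(p_*\mu_i)$ is a union of entire leaves (Theorem~\ref{variable_curvature}), the leaf $L$ through $p$ lies in $\text{supp}(p_*\mu_i)$, so $T^{1}L$ meets $\text{supp}(\mu_i)$ nontrivially, and by the u-Gibbs property this intersection contains entire $\cW^{u}_{\cF,X}$-plaques. A standard Hopf argument on $T^{1}L$ --- the absolute continuity of the leafwise stable foliation combined with the SRB property of $\mu_i$ --- then shows that Lebesgue-a.e.\ point of $T^{1}L$ lies in $B(\mu_i)$. Fubini over the fibers $T^{1}_{q}L$, $q\in L$, forces $f_i(q)=1$ for Lebesgue-a.e.\ $q\in L$, and the continuity of $f_i$ proved above upgrades this equality to every $q\in L$, in particular to $p$.

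The main obstacle is the cross-leaf continuity step. The Pesin transverse stable manifolds $W^{ss,\perp}(v)$ are only defined on a $\mu_i$-full-measure set --- which could a priori be Lebesgue-null in $T^{1}\cF$ --- and their sizes vary only Lusin-measurably with $v$; the task is to verify that, combined with $\cW^{s}_{\cF,X}$, they nevertheless yield an absolutely continuous ``generalized stable holonomy'' between the transversals $T^{1}_{p}L$ and $T^{1}_{p'}L'$ with Jacobians uniformly close to $1$ for $p'$ near $p$, so that the $\cW^{cs}_{\cF,X}$-saturation of $B(\mu_i)$ transfers the measure of $B(\mu_i)\cap T^{1}_{p}L$ to that of $B(\mu_i)\cap T^{1}_{p'}L'$ with an error that vanishes as $p'\to p$.
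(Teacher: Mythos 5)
Your proposal stalls exactly where you say it does, and the gap is real: you are trying to prove \emph{two-sided} continuity of $f_i$ across leaves by constructing a generalized stable holonomy between $T^1_p\cF$ and $T^1_{p'}\cF$ with Jacobians uniformly close to $1$, and you correctly observe that the Pesin transverse stable manifolds have only measurably varying sizes, so no such uniform two-sided estimate is available. The paper avoids this entirely with a one-sided trick you are missing: it only proves that each $f_i$ is \emph{lower semicontinuous}, and then invokes the identity $\sum_i f_i\equiv 1$ (Corollary~\ref{proportion}) to conclude. Indeed, if every $f_i$ is lower semicontinuous and the sum is identically $1$, then $f_i = 1-\sum_{j\neq i}f_j$ is also upper semicontinuous, hence continuous. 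Lower semicontinuity needs only the inequality $\liminf_{p'\to p}f_i(p')\geq f_i(p)$, which follows from the fact that a positive-Lebesgue-measure set of basin directions in $T^1_p\cF$ spreads, via the absolutely continuous family of transverse Pesin stable manifolds, onto a set of directions in $T^1_{p'}\cF$ of not much smaller measure — no upper bound on the Jacobian, and no control of what \emph{else} might enter the basin near $p'$, is required. Note also that your worry about the Pesin manifolds living only on a $\mu_i$-full, possibly Lebesgue-null set is resolved by working with the basin itself: every orbit in $\cB(\mu_i)$ is forward-generic for $\mu_i$, hence has well-defined negative forward transverse Lyapunov exponents, so the transverse Pesin stable manifolds exist through (almost) every point of the basin, which has positive Lebesgue measure.

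On the value-one claim, the paper's printed proof is silent, so your Hopf-argument sketch is a genuine addition; but it too has a soft spot. From $L\subseteq\operatorname{supp}(p_*\mu_i)$ you want an unstable plaque in $T^1L$ Lebesgue-almost all of whose points lie in $\cB(\mu_i)$; the u-Gibbs property only guarantees this for $\mu_i$-almost every plaque, and a leaf in the support may well satisfy $\mu_i(T^1L)=0$, so some limiting or saturation argument (or the continuity of $f_i$ together with $\int f_i\,d(p_*\mu_i)=1$, which forces $f_i=1$ on $\operatorname{supp}(p_*\mu_i)$ once $f_i$ is known to be continuous and $p_*\mu_i$-a.e. equal to $1$) is needed to close that step. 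That last route is the cleanest: establish continuity first by the semicontinuity argument, then note $f_i\geq \mathbf{1}_{\cB(\mu_i)}$ integrated against $\mu_i$ gives $f_i=1$ $p_*\mu_i$-a.e., and continuity propagates the equality to the full support.
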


If the laminations we  consider  are not just by negatively curved leaves but by hyperbolic leaves (that is, constant $-1$ sectional curvatures) then the  projection of $u$-Gibbs states to $M$  are precisely the harmonic measure introduced by L. Garnett in \cite{Garnett}. In this case the visibility function are {\em harmonic}, in the sense that their foliated Laplacian is zero.

We consider in section 7 smooth foliations by hyperbolic leaves which are transversely conformal.
In this setting, Deroin and Kleptsyn in \cite{Deroin-Kleptsyn} proved that, if the foliation does not admit any transverse invariant measure, then the foliation has finitely minimal sets, each of them carries exactly one harmonic measure, and the normal Lyapunov exponent of each of these measures is negative for the Brownian motion in the leaves.
We prove that the normal Lyapunov exponent of the corresponding $u$-Gibbs state coincides with the one of the Brownian motion, and therefore is negative, giving:

\begin{theo}
\label{no_invariant_measure}
Let $M$ be a compact manifold with a  transversly conformal foliation
$\cF$ of class $C^1$, with no transverse invariant measure,
endowed with a leafwise $C^3$ hyperbolic metric   and denote by $X$ the generator of the foliated geodesic flow on the unit tangent bundle $T^1\cF$ to the foliation. Then:

1) Every ergodic u-Gibbs state for $X$
has negative normal Lyapunov exponent.

2)  There are only a finite number of ergodic u-Gibbs states, each of them is an
SRB-measure and the union of their basins of attraction has full Lebesgue measure on $T^1\cF$.
\end{theo}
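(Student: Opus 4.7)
The plan is to reduce Theorem~\ref{no_invariant_measure} to Theorem~\ref{main_theorem} and to the Deroin--Kleptsyn theorem about transversally conformal foliations by hyperbolic leaves without transverse invariant measure. Part (2) will follow at once from part (1) together with Theorem~\ref{main_theorem}(2); the whole difficulty concentrates in part (1), and more precisely in identifying (in sign) the normal Lyapunov exponent of an ergodic u-Gibbs state for the foliated geodesic flow with the normal Lyapunov exponent of the corresponding harmonic measure for the leafwise Brownian motion.

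For part (1), fix an ergodic u-Gibbs state $\mu$ for $X$ and set $m = p_*\mu$ on $M$. By the correspondence between u-Gibbs states of the geodesic flow and Garnett harmonic measures (the content of the earlier section on hyperbolic leaves), $m$ is a harmonic measure on $(M,\cF)$, and ergodicity of $\mu$ for the flow translates to ergodicity of $m$ for the leafwise heat diffusion, so that $m$ is supported on one minimal set $K$ of $\cF$. The Deroin--Kleptsyn theorem provides finitely many minimal sets $K_1,\dots,K_\ell$, each carrying a unique ergodic harmonic measure $m_i$ with strictly negative Brownian normal Lyapunov exponent. Hence $m=m_i$ for some $i$. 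I expect the main obstacle to be the last step: passing from the Brownian negativity of $m_i$ to negativity of the normal Lyapunov exponent $\lambda^\perp_G(\mu)$ of $\mu$ under the geodesic flow. To carry this out, I would use transverse conformality to express the logarithmic derivative of the transverse holonomy along any leafwise path $\gamma$ as $\int_\gamma \eta$ for a continuous leafwise $1$-form $\eta$; Birkhoff's ergodic theorem then gives $\lambda^\perp_G(\mu) = \int \eta(X)\,d\mu$, while It\^o's formula on the leaves represents the Brownian exponent $\lambda^\perp_B(m)$ as the integral over $M$ of a second-order leafwise expression built from $\eta$. Since $\mu$ disintegrates over $m$ as the leafwise Liouville class on the spherical fibres (the defining feature of u-Gibbs states of the foliated geodesic flow in the hyperbolic-leaves case), Fubini reduces $\int\eta(X)\,d\mu$ to a quantity with the same sign as $\lambda^\perp_B(m_i)$, producing $\lambda^\perp_G(\mu)<0$.

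With part (1) in hand, part (2) is a direct application of Theorem~\ref{main_theorem}(2): the leafwise $C^3$ hyperbolic metric makes the foliated geodesic flow $X$ an $\cF$-hyperbolic vector field on $T^1\cF$ which is $C^2$ in the ambient manifold, and the transverse $C^1$ regularity of $\cF$ is enough to run the absolute continuity and Pesin arguments invoked in that theorem. Finiteness of the ergodic u-Gibbs states, the SRB property of each, and the fact that the union of their basins of attraction has full Lebesgue measure on $T^1\cF$ all drop out immediately.
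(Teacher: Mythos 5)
Your overall architecture matches the paper's: part (2) is an immediate application of Theorem~\ref{main_theorem}(2) once part (1) is known, and part (1) is reduced, via the correspondence between u-Gibbs states of the foliated geodesic flow and Garnett harmonic measures (Theorem~\ref{teoMatilde}) and the Deroin--Kleptsyn theorem, to comparing the normal Lyapunov exponent of the u-Gibbs state $\mu^+$ under the geodesic flow with the Brownian exponent of the harmonic measure $\nu=\pi_*\mu^+$. But the step where you carry out this comparison is exactly where all the work lies, and your sketch of it contains an error. The conditional measures of $\mu^+$ on the spherical fibres $T^1_x\cF$ are \emph{not} the round (Liouville) measures --- the defining property of a u-Gibbs state here is constant density along \emph{unstable horospheres}, not along fibres. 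In a chart $D\times S^{n-1}$ the measure $\mu^+$ disintegrates as $k(x,\xi)\,dx\,dm_h(\xi)$, where $k$ is the Poisson kernel and $m_h$ is the measure on $\partial D$ representing the harmonic density $h$; the fibre measure over $x$ is therefore the hitting measure $k(x,\xi)\,dm_h(\xi)/h(x)$, which is uniform only when $\nu$ is totally invariant, the case excluded by hypothesis. Had the fibre measures been round, the Fubini computation you propose would give $\int\omega(X)\,d\mu^+=0$ (the fibrewise average of a linear form over the round sphere vanishes), not a quantity of the same sign as $\int_M\Delta\log|u|\,d\nu$. So ``Fubini reduces it to a quantity of the same sign'' is not a proof and, taken literally, fails.

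What is actually needed is the paper's lemma asserting the \emph{equality} $\lambda(\nu)=\lambda(\mu^+)$: one writes $\lambda(\nu)=\int_M\div Y\,d\nu$ with $Y=\nabla\log|u|$, applies Candel's integration-by-parts identity $\int_M\div Y\,d\nu=-\int_M\eta(Y)\,d\nu$ where $\eta=d\log h$ is the modular form of the harmonic measure, and then uses the Poisson representation $\nabla h(x)=\int_{S^{n-1}}\nabla k(x,\xi)\,dm_h(\xi)$ together with the identity $X=-\nabla\log k(\cdot,\xi)$ on the centre-unstable leaf attached to $\xi\in\partial D$ to convert $-\int_M\eta(Y)\,d\nu$ into $\int_{T^1\cF}\omega(X)\,d\mu^+=\lambda(\mu^+)$. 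This chain (modular form, Candel's lemma, the exact fibre disintegration of $\mu^+$, and the Poisson-kernel computation) is the content your proposal is missing. The remaining ingredients of your argument --- identifying $\pi_*\mu$ with an ergodic harmonic measure supported on a minimal set, invoking Deroin--Kleptsyn for negativity of the Brownian exponent, and deducing part (2) from Theorem~\ref{main_theorem}(2) --- are correct and coincide with the paper's route.
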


 Our initial interest in such foliations comes from the study of holomorphic foliations on complex surfaces. In some cases, one knows that the leaves are conformally hyperbolic surfaces but the hyperbolic metric can \emph{a priori} depend only continuously on the leaves \cite{Candel2}. This explains why some of our statements do not require transversal smoothness.

These results may be seen as generalization of \cite{Bonatti-GomezMont} where the $2$ first authors proved the existence of a unique u-Gibbs state for foliatons obtained as suspension of the fundamental group of a Riemann surface over finite volume hyperbolic surfaces, leading to the same result for some Riccati equations.

In sections 2 to 4 we deal with laminations,  in sections 5 and 6 we deal with foliations and in this last section  the foliation has leaves
of negative curvature.
In section 7 we deal with transversely conformal foliations with
hyperbolic leaves.

\section{Laminated hyperbolicity}
\subsection{Basic definition of foliations and laminations}

\begin{defi}
A {\em lamination} or {\em foliated space} $(M,\cF)$ is a metrizable space $M$ together with an open covering $\{U_\alpha\}$
and homeomorphisms $\varphi_\alpha: \RR^n\times T_\alpha\to U_\alpha$ such that:
\begin{itemize}
\item[$\cdot$] $T_\alpha$ is a locally compact space.
\item[$\cdot$] Whenever $U_\alpha\cap U_\beta\neq \emptyset$, the change of coordinates $\varphi_\beta^{-1}\circ \varphi_\alpha$
has the form $\varphi_\beta^{-1}\circ \varphi_\alpha(x,t)=(g_{\alpha\beta}(x,t), h_{\alpha\beta}(t))$.
\item[$\cdot$] For fixed $t$ in $T_\alpha$, the map $g_{\alpha\beta}$ is differentiable.
\end{itemize}

A lamination is a {\em foliation} (of class $C^r$) if $T_\alpha=\RR^m$ and the maps $\varphi_\beta^{-1}\circ \varphi_\alpha$ are differentiable (of class $C^r$), in which case the space $M$ is a manifold of dimension $n+m$.
\end{defi}

The $U_\alpha \simeq \RR^n\times T_\alpha$ are called {\em foliated charts}. As usual, the notation for a foliated chart we will omit the
homeomorphism $\varphi_\alpha$. In a foliated chart, the sets of the form $\RR^n\times \{t\}$ are called {\em plaques}. Maximal connected manifolds that can be obtained by glueing plaques via de $g_{\alpha\beta}$ are called {\em leaves}.

\begin{defi}
A {\em Riemannian metric} on a lamination $(M,\cF)$ (of class $C^r$) is a choice of Riemannian metric on each leaf that, when seen in a foliated chart $\RR^n\times T_\alpha$,
has a continuous variation (in the $C^r$ topology) with respect to $t\in T_\alpha$. It induces on every leaf a distance  function $d_\cF$.
\end{defi}

In a foliation, a Riemannian metric can be obtained by restricting to the leaves a Riemannian metric in the ambient manifold $M$.
In a lamination, Riemannian metrics can be obtained by defining them locally and using partitions of unity.

\begin{defi}
Given a lamination $(M,\cF)$ with a Riemannian metric, its tangent bundle $T\cF$ is the lamination whose leaves are the  tangent spaces to the leaves of $\cF$,
where the laminated structure is defined in the natural way. The unit tangent bundle $T^1\cF \subset T\cF$ is formed by those vectors of norm 1.
\end{defi}

\subsection{Definition of laminated hyperbolicity}

For the rest of section 2, let $M$ be a compact metric space endowed with a lamination $\cF$,a Riemannian metric    and  $X$ a continuous vector field on $M$
tangent  to the plaques of $\cF$.

\begin{defi}\label{d.unifCr} The vector field will be called \emph{uniformly $C^r$ in the plaques} if
its expression in any laminated chart of $\cF$ induces a $C^r$ vector field in each plaque of
$\cF$, varying continuously with the plaque for the $C^r$-topology.
\end{defi}
\begin{exam}
Let $K\subset \RR$ be the compact set $\{0\}\cup\{\frac 1n\}_{n\in\NN^*}$ and $M= S^1\times K$ (where $S^1$ is the
circle $\RR/\ZZ$) trivialy laminated by the circles $S^1\times \{t\}$. We consider the vector field $X$ tangent to the leaves
and whose expression in $S^1\times \{\frac 1n\}$ is
$$X(t, \frac 1n)=\frac 1n\sin(2\pi nt)\frac{\partial}{\partial t}$$
and $X$ vanishes on $S^1\times \{0\}$.
Then $X$ is a continuous vector field, smooth in every leaf but not uniformly $C^1$ in the plaques.
\end{exam}

Assume that $X$ is uniformly $C^r$ in the plaques. Let $\{X_t\}_{t\in\RR}$ denote the flow of $X$.
For every $t$, $X_t$ is a homeomorphism of $M$ which is uniformly $C^r$ in the plaques. This allows us
to consider the differential $D_\cF(X_t)$ along the plaques, which defines a map on the tangent bundle to the
lamination $T\cF$ which is
uniformly $C^{r-1}$ along the plaques.

\begin{defi}\label{d.F-hyp} Let $M$ be a compact metric space, $\cF$ a   $C^r$-lamination, $r\geq 1$ on $M$
with a Riemannian metric on $T\cF$.
Let $X$ be a non-zero vector field tangent to $\cF$ which is uniformly $C^1$ in the plaques. We say that
$X$ is \emph{laminated hyperbolic}, or shortly \emph{$\cF$-hyperbolic}, if   $T\cF$  splits  in $D_\cF(X_t)$-invariant bundles
$$T\cF= E^s\oplus \RR X\oplus  E^u$$
such that there is  $T_0>0$ and $\lambda>1$ such that, for every $x\in M$ and any vectors $u\in E^s(x)$
and $v\in E^u(x)$ and any $T\geq T_0$ one has
$$
\|D_\cF(X_{T})(u)\|\leq \lambda^{-T}\|u\|\hskip 1cm
\mbox{ and } \hskip 1cm \|D_\cF(X_{T})(v)\|\leq \lambda^{T}\|v\|.
$$
\end{defi}

\subsection{  Stable and unstable laminations}

Exactly as in the usual hyperbolic case, the stable and unstable bundles of a laminated hyperbolic
vector field are always continuous.

Given any $x\in M$ we call the \emph{$\cF$-stable manifold of $x$}, and we denote it by $W^s_\cF(x)$,
the set of points $y$ in the leaf  $\cF_x$ through $x$ such that $\lim_{t\to +\infty} d_\cF(X_t(x) ,X_t(y))=0$.
In the same way the $\cF$-unstable manifold $W^u_\cF(x)$ is the set of $z\in \cF_x$ such that
$\lim_{t\to -\infty} d_\cF(X_t(x), X_t(z))=0$.

The local laminated stable and unstable manifolds of radius $\delta$, denoted by $W^s_{\cF,\delta}(x)$
and $W^u_{\cF,\delta}(x)$,  are the subsets of the laminated  stable and unstable manifolds whose positive
(resp negative) iterates remain at distance (in the leaf) less than $\delta$.

The proof of the existence of stable and unstable foliations (\cite{Shub}) can be directly
adapted to this case leading to the following statement:

\begin{theor}\label{t.invariant-manifolds} Let $(M,\cF)$ be a $C^r$-lamination on a compact
space and $X$  an $\cF$-hyperbolic
vector field which is uniformly $C^r$ along the plaques, $r\geq 1$, then there is $\delta>0$
such that the local laminated stable manifold  $W^u_{\cF,\delta}(x)$ is a $C^r$-disc centered at
$x$ in $\cF_x$ of the same dimension as $E^s$ and tangent to $E^s$.  The family of disks
$\{W^s_{\cF,\delta}(x)\}_{x\in M}$ is continuous for the $C^r$-topology.
The laminated local unstable manifolds $\{W^u_{\cF,\delta}(x)\}_{x\in M}$
form a
family of $C^r$-discs tangent to $E^u$ and varying continuously for the $C^r$ topology.
The global stable manifolds satify:
$$W^s_{\cF ,X}(x)= \bigcup_{t>0} X_{-t} W^s_{\cF,\delta}(X_t(x)).$$
The global $\cF$-stable manifolds are $C^r$-inmersed manifolds which fit into a $C^0$-lamination of $M$
by $C^r$-plaques, sublaminating the plaques of $\cF$ and varying $C^r$-continuously:
$\cW^s_{\cF,X}$ and $\cW^u_{\cF,X}$.
\end{theor}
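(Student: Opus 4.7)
The plan is to run the classical Hadamard--Perron construction leafwise and then use the uniformity built into $\cF$-hyperbolicity, together with the $C^r$-laminated structure, to upgrade leafwise $C^r$-regularity to $C^r$-continuous dependence on the base point $x\in M$. By compactness of $M$ and Definition~\ref{d.F-hyp}, for any $T\geq T_0$ the time-$T$ map $X_T$, restricted to any plaque of $\cF$, is a $C^r$-diffeomorphism germ that is uniformly hyperbolic with respect to the splitting $E^s\oplus\RR X\oplus E^u$, with hyperbolic constants and $C^r$-norms bounded independently of the plaque; leafwise, this puts us exactly in the hypotheses of the standard stable manifold theorem of \cite{Shub}.

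Concretely, first I would exponentiate the leaf metric at $x$ to identify a ball of $\cF_x$ with a ball in $E^s(x)\oplus\RR X(x)\oplus E^u(x)$, express $X_{T_0}$ in these coordinates, and obtain $W^s_{\cF,\delta}(x)$ as the graph of a map $\psi_x^s: B_{E^s(x)}(0,\delta)\to E^u(x)\oplus\RR X(x)$ fixed by the usual graph transform; its contraction rate depends only on $\lambda$, $T_0$ and the uniform $C^1$-bound on $X$. The $C^r$-regularity of $\psi_x^s$ then comes from the Hirsch--Pugh--Shub fiber-contraction principle applied to the $r$-jet bundle, with rates again uniform in $x$. The unstable case is the time-reversed version.

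For continuity in $x$ in the $C^r$ topology, the plan is to package the pointwise graph transforms into a single fibered contraction over $M$: the fibers are Banach spaces of $C^r$-sections in local trivializations of $T\cF$ available from the $C^r$-laminated structure and the continuity of $E^s, E^u$; the fibered map is continuous because $X$ is uniformly $C^r$ along plaques in the sense of Definition~\ref{d.unifCr}. The fibered contraction mapping theorem then yields continuous $C^r$-dependence of the fixed point on $x$. The identity $W^s_{\cF,X}(x)=\bigcup_{t>0}X_{-t}W^s_{\cF,\delta}(X_t(x))$ is standard once the local discs are known: one inclusion is tautological, and for the other, any $y$ with $d_\cF(X_t(x),X_t(y))\to 0$ must eventually lie within $\delta$ of the $X_t(x)$-orbit in its leaf, hence inside $W^s_{\cF,\delta}(X_t(x))$ by uniqueness of the local stable disc. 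The $C^0$-sublamination by $C^r$-plaques is then read off directly from the continuous-in-$x$, $C^r$-transverse family $\{W^s_{\cF,\delta}(x)\}_{x\in M}$: inside any foliated chart of $\cF$, these discs provide the plaques of a sublaminated chart for $\cW^s_{\cF,X}$.

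The main obstacle is the bookkeeping required to phrase the leafwise graph transforms as a single fibered contraction with a common Banach base, since a priori the relevant sections live in different spaces as $x$ moves between leaves. This is handled by pulling back to a fixed Euclidean model via local trivializations of the $C^r$-lamination and verifying that the pulled-back time-$T_0$ map of $X$ varies continuously in $x$ in the $C^r$ topology, which is exactly the uniform $C^r$ condition on $X$. Once this setup is in place, all contraction and regularity estimates are uniform across $M$, so the adaptation of the classical argument to the laminated setting becomes essentially mechanical.
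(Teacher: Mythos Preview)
Your proposal is correct and follows essentially the same route as the paper's sketch: both run the graph transform/disc-family contraction on the space of continuous families $\{D_x\}_{x\in M}$ of leafwise discs tangent to a cone field, using uniform $C^r$-in-the-plaques regularity of $X$ to obtain a single contraction over $M$ whose fixed point gives the local invariant manifolds with continuous $C^r$-dependence. Your packaging via a fibered contraction over $M$ and the Hirsch--Pugh--Shub fiber-contraction principle for $C^r$-regularity is just a slightly more explicit formulation of what the paper describes as ``the action on the set of such families of discs'' and ``applying the same argument to the differential''.
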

\begin{proof}[Sketch of proof]
The classical proof of the existence of invariant manifold in the theory of hyperbolic diffeomorphisms consists in considering the space of $C^1$-disc-families $\{D^u_x\}_{x\in M}$, of a given radius and tangent to an invariant cone field
around the unstable bundle (in particular tranverse to the stable bundle), and depending continously with $x$ for the $C^1$-topology.
Then one considers the natural action of the diffeomorphism on one such family of discs: the discs become larger (since the dynamics expands the
unstable direction) and one shrinks them in order to recover a family of discs with the same properties.  The transverse contraction
implies that the action of the space of all such families is contracting, leading to a unique fixed family of
discs which is the announced family of local unstable manifolds.  This family depends continuously on the dynamics for the $C^1$-topology.
Applying the same argument to the differential of the diffeomorphism, one gets that the invariant manifolds are indeed $C^r$, and that they depend continouly
on the point and on the diffeomorphism for the $C^r$-topology.

Here we consider the families of discs  $D^u_x$ contained in the leaves, of a given radius,
and tangent to a conefield in the leaves around the unstable bundle, and depending continously on $x\in M$ for the $C^1$-topology.
One defines the action on the set of such families of discs exactly
as in the classical way. This action is still a contraction, leading to a unique fixed point which is the announced family of
local unstable manifolds. The higher regularity is obtained with the same trick.
\end{proof}

\section{Gibbs $u$-States For $\cF$-hyperbolic Vector Fields}
In this section we consider a compact lamination $(M,\cF)$ endowed with an  $\cF$-hyperbolic vector field $X$ which is uniformly leafwise $C^2$ and
a Riemannian metric.

\subsection{Distortion control}
As explained in the previous section, the foliated unstable leaves form a foliation by $C^2$ leaves depending
$C^2$-continuously from the point. Hence, the restriction of the  flow $X_T$ is also uniformly $C^2$.  Furthermore,
for $T$ large enough, $X_{-T}$ induces a uniform contraction in the $\cF$-unstable leaves. In particular,
given two points $x,y$ in the same $\cF$-unstable leaf, the foliated distance $d_\cF(X_{-nT}(x),X_{-nT}(y))$ is
decreasing exponentially (with the rate $\lambda^{-T}$ so that the sum $\sum_0^{+\infty} d_\cF(X_{-nT}(x),X_{-nT}(y))$
converges and is bounded  proportionally to $d_\cF(x,y)$.
Furthermore the function $\log Det D^uX_{-T}$ is uniformly $C^1$ along the $\cF$-unstable leaves,
where the \emph{unstable derivative  $D^uX_{-T}$} is the restriction of the derivative  $D_\cF X_{-T}$ to the unstable direction  $E^u$.

One deduces the following foliated version of the classical distortion lemma:
\begin{lemm} \label{lemma:distortion_control} There is $C>0$ such that for every $x,y\in M$ in the same $\cF$-unstable leaf and for every $T>0$ one has

$$\left| \log |  Det D^u X_{-T}(x)|-\log| Det D^u X_{-T}(y)|\right|<C d_\cF(x,y)$$
\end{lemm}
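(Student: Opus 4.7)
The plan is to adapt the classical bounded distortion argument for uniformly hyperbolic flows to this laminated setting. The estimate reduces to a telescoping cocycle computation exploiting backward contraction along the $\cF$-unstable leaves together with uniform Lipschitz regularity of a suitable potential.

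First I would fix $T_0>0$ and $\lambda>1$ as provided by Definition~\ref{d.F-hyp} and introduce the potential
\[
\varphi(z)\;=\;\log|Det D^u X_{-T_0}(z)|.
\]
Since $X$ is uniformly $C^2$ along the plaques and, by Theorem~\ref{t.invariant-manifolds}, the $\cF$-unstable foliation has $C^2$ leaves varying $C^2$-continuously, the restriction of $\varphi$ to each $\cF$-unstable leaf is $C^1$, with derivative depending continuously on the point. Compactness of $M$ then supplies a uniform Lipschitz constant $L>0$ such that
\[
|\varphi(x)-\varphi(y)|\;\leq\;L\,d_\cF(x,y)
\]
whenever $x,y$ lie in the same $\cF$-unstable leaf.

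Next I would use the cocycle identity. For any $n\in\NN$, the chain rule gives
\[
\log|Det D^u X_{-nT_0}(x)|\;=\;\sum_{k=0}^{n-1}\varphi\bigl(X_{-kT_0}(x)\bigr),
\]
and similarly for $y$. Backward contraction in $E^u$ yields
\[
d_\cF\bigl(X_{-kT_0}(x),X_{-kT_0}(y)\bigr)\;\leq\;\lambda^{-kT_0}\,d_\cF(x,y),
\]
at least once the iterates are close enough for the local unstable manifolds to be meaningful (this is where I would first treat the case $d_\cF(x,y)<\delta$ with $\delta$ from Theorem~\ref{t.invariant-manifolds}, and then cover a general pair by a chain of intermediate points on the unstable leaf). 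Summing the Lipschitz bound along the orbit and using the geometric series gives
\[
\Bigl|\log|Det D^u X_{-nT_0}(x)|-\log|Det D^u X_{-nT_0}(y)|\Bigr|\;\leq\;\frac{L}{1-\lambda^{-T_0}}\,d_\cF(x,y).
\]

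Finally, for an arbitrary $T>0$ I would write $T=nT_0+s$ with $0\leq s<T_0$, split
\[
\log|Det D^u X_{-T}|\;=\;\log|Det D^u X_{-s}|\;+\;\log|Det D^u X_{-nT_0}|\circ X_{-s},
\]
and absorb the first summand into the constant: by compactness of $M$ and uniform $C^2$ control on the finite time interval $[0,T_0]$, $\log|Det D^u X_{-s}|$ is Lipschitz along $\cF$-unstable leaves with a constant independent of $s\in[0,T_0]$, and $X_{-s}$ moves $x,y$ through bounded distance on their common unstable leaf. Setting
\[
C\;=\;\frac{L}{1-\lambda^{-T_0}}\;+\;L',
\]
with $L'$ absorbing the short-time contribution, yields the desired estimate. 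The only genuinely delicate point is the uniformity of the Lipschitz constant for $\varphi$ along unstable leaves in the laminated (rather than foliated) context; this is ensured by compactness of $M$ combined with the $C^2$-continuous variation of the unstable leaves asserted in Theorem~\ref{t.invariant-manifolds}.
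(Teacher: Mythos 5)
Your proposal is correct and is essentially the paper's own argument: a telescoping cocycle identity for $\log|Det\,D^uX_{-T}|$, a uniform Lipschitz bound on the one-step potential along unstable leaves, and summation of a geometric series coming from backward contraction in $E^u$. The only differences are cosmetic refinements (the paper telescopes over time-$1$ steps and writes the estimate only for integer times, while you use $T_0$-steps, handle the fractional remainder $s$, and note the chaining needed for non-local pairs), so the two proofs coincide in substance.
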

\begin{demo}
$$\begin{array}{rc}
\left| \log |  Det D^u X_{-n}(x)|-\log| Det D^u X_{-n}(y)|\right|&\leq \\
\sum_0^{n-1} \left| \log |  Det D^u X_{-1}(X_{-i}(x))|-\log| Det D^u X_{-1}(X_{-i}(y))|\right|& \leq \\
C_0 \sum_0^{n-1} d_\cF(X_{-i}(x) X_{-i}(y))&
\end{array}
$$

\noindent
where $C_0$ is a bound for the derivative  of $\log |Det D^u X_{-1}|$ along the $\cF$-unstable leaves, and the constant
$C$ is given by $C=C_0\cdot \sum_0^{+\infty}\lambda^{-i}$.
\end{demo}

\subsection{Absolute continuity of the $\cF$-stable and unstable foliations}

When $X$ is uniformly $C^2$ in the leaves,  an important consequence of the distortion control is the absolute
continuity in the leaves of $\cF$ of the $\cF$-stable and $\cF$-unstable foliations.

\begin{defi} \begin{enumerate}
              \item Let $M$ be a manifold and $\cG$ be a foliation on $M$. One says that $\cG$ is \emph{absolutely
continuous} with respect to Lebesgue measure in $M$ if given any foliated chart $U$ of $\cG$ and $\Si$ a cross section intersecting the plaques
$P_x$ of $\cG$ in
$U$ in exacly $1$ point
one has the following property:
Given any measurable set  $A$ in $U$ such that $A$ has positive Lebesgue measure, consider  the set $\Sigma_A\subset\Si$  of points $x$ in $\Si$ so that
the plaque $P_x$ through $x$  cuts $A$ in a positive $P_x$-Lebesgue  measure set. Then
$\Sigma_A$ has positive measure for the  Lesbesgue measure of $\Si$.

\item Assume  that  $\cF$ is a lamination or a foliation  of $M$ and that $\cG$ is a subfoliation of $\cF$.  One says that $\cG$
is \emph{absolutely continuous with respect to Lebesgue in the leaves of $\cF$} if, for any leaf $\cL$ of $\cF$ the restriction of $\cG$ to $\cL$ is a foliation which is
absolutely continuous with respect to Lebesgue in $\cL$.
             \end{enumerate}
\end{defi}

\begin{theor} When $X$ is uniformly $C^2$ in the leaves,  the $\cF$-stable and $\cF$-unstable foliated invariant manifolds form foliations
$\mathcal{W}^s$ and
$\mathcal{W}^u$ of $M$ which are absolutely continuous with respect to Lebesgue in the leaves of $\cF$.
\end{theor}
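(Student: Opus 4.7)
The plan is to run, leaf by leaf, the classical absolute continuity argument for stable and unstable foliations of $C^{2}$ uniformly hyperbolic flows, checking that all the constants are uniform across leaves of $\cF$. That uniformity is built into Definition~\ref{d.F-hyp} (the rates $\lambda$ and $T_{0}$ do not depend on the leaf) and Lemma~\ref{lemma:distortion_control} (the distortion constant $C$ is global). Since absolute continuity in the leaves of $\cF$ is by definition a leafwise notion, it suffices to fix one leaf $\cL$ of $\cF$ and show that $\cW^{s}_{\cF,X}|_{\cL}$ and $\cW^{u}_{\cF,X}|_{\cL}$ are absolutely continuous foliations of $\cL$ for its intrinsic Riemannian measure; on $\cL$, $X|_{\cL}$ is $C^{2}$ and uniformly hyperbolic, and by Theorem~\ref{t.invariant-manifolds} these restrictions are the strong unstable and strong stable foliations of $X|_{\cL}$, with $C^{2}$ leaves varying $C^{2}$-continuously.

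I would then reduce the unstable statement to absolute continuity of the local unstable holonomy. Given $x\in\cL$ and $y\in W^{u}_{\cF,\delta}(x)$, I pick small cross-sections $\Sigma_{1},\Sigma_{2}\subset\cL$ through $x$ and $y$ transverse to $\cW^{u}_{\cF,X}|_{\cL}$, and the holonomy $\pi:\Sigma_{1}\to\Sigma_{2}$ defined locally by $\pi(z)\in W^{u}_{\cF,\delta}(z)\cap\Sigma_{2}$. The goal becomes to show that $\pi$ has a positive continuous Jacobian, bounded above and below by constants depending only on the uniform data. Since the strong unstable foliation in $\cL$ has codimension $\dim E^{s}+1$, the $\Sigma_{i}$ must contain the flow direction; to streamline the bookkeeping I would use a Poincar\'e section, reducing to the unstable holonomy of the discretised hyperbolic diffeomorphism between small stable discs of pure dimension $\dim E^{s}$.

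The Jacobian is then computed by pulling back along the backward flow. Setting $z_{n}:=X_{-nT_{0}}(z)$ and $w_{n}:=X_{-nT_{0}}(\pi(z))$, and letting $\pi_{n}$ denote the induced unstable holonomy between $X_{-nT_{0}}(\Sigma_{1})$ and $X_{-nT_{0}}(\Sigma_{2})$, the change-of-variables formula gives
\[
\mathrm{Jac}(\pi)(z)\;=\;\mathrm{Jac}(\pi_{n})(z_{n})\cdot\frac{|Det\,D^{u}X_{nT_{0}}(z_{n})|}{|Det\,D^{u}X_{nT_{0}}(w_{n})|}.
\]
Because $\pi(z)\in W^{u}_{\cF,\delta}(z)$, Definition~\ref{d.F-hyp} yields $d_{\cF}(z_{k},w_{k})\leq \lambda^{-kT_{0}}d_{\cF}(z,\pi(z))$, so $\sum_{k\geq 0}d_{\cF}(z_{k},w_{k})$ is uniformly summable; Lemma~\ref{lemma:distortion_control} then confines the ratio of unstable determinants to a fixed compact subinterval of $(0,\infty)$ independent of $n$, $z$, and $\cL$. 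On the other hand, the images $X_{-nT_{0}}(\Sigma_{i})$ become exponentially $C^{1}$-close near $z_{n}$ (using the $C^{1}$-continuous dependence of unstable leaves from Theorem~\ref{t.invariant-manifolds}), so a standard implicit-function-theorem argument forces $\mathrm{Jac}(\pi_{n})(z_{n})\to 1$ uniformly as $n\to\infty$. Letting $n\to\infty$ produces a continuous, uniformly positive and bounded Jacobian for $\pi$, which is precisely absolute continuity. The stable case follows by applying the same argument to $-X$, which is again $\cF$-hyperbolic and $C^{2}$ in the plaques.

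The main obstacle I expect is the careful handling of the flow direction in the second step: the strong unstable foliation of $\cL$ has codimension $\dim E^{s}+1$, so naive transversals include the neutral flow direction which has to be factored out before the distortion lemma bears on the correct quantities. Once organised via a Poincar\'e reduction, the remaining work is a direct application of Lemma~\ref{lemma:distortion_control} with no new ideas beyond the classical Anosov absolute continuity argument, and the uniformity of the constants across leaves of $\cF$ is automatic since $\lambda$, $\delta$ and $C$ are uniform by hypothesis.
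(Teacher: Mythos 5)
The paper offers no argument for this theorem beyond the citation to Brin, so your plan --- run the classical Anosov--Brin absolute continuity proof leaf by leaf, observing that $\lambda$, $T_0$, $\delta$ and the distortion constant of Lemma~\ref{lemma:distortion_control} are uniform over the leaves of $\cF$ --- is exactly the strategy the authors intend. The reduction to holonomy maps with bounded Jacobians, the Poincar\'e-section treatment of the flow direction, and the backward-iteration scheme $\pi = X_{nT_0}\circ\pi_n\circ X_{-nT_0}$ are all correct in outline.

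However, your central change-of-variables identity is wrong, and the error conceals the real content of the proof. The holonomy $\pi\colon\Sigma_1\to\Sigma_2$ along unstable leaves is a map between transversals of dimension $\dim E^s$ (after the Poincar\'e reduction), so the chain rule gives
\[
\mathrm{Jac}(\pi)(z)=\mathrm{Jac}(\pi_n)(z_n)\cdot\frac{\bigl|\det\bigl(DX_{nT_0}|_{T_{w_n}X_{-nT_0}(\Sigma_2)}\bigr)\bigr|}{\bigl|\det\bigl(DX_{nT_0}|_{T_{z_n}X_{-nT_0}(\Sigma_1)}\bigr)\bigr|},
\]
that is, the correction factor is a ratio of Jacobians of the flow restricted to the backward images of the \emph{transversals}, not the ratio of unstable determinants $|\det D^uX_{nT_0}(z_n)|/|\det D^uX_{nT_0}(w_n)|$ that you wrote. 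Your ratio is indeed uniformly bounded by Lemma~\ref{lemma:distortion_control} (it equals $|\det D^uX_{-nT_0}(\pi(z))|/|\det D^uX_{-nT_0}(z)|$ and converges to $\psi^u(z,\pi(z))$), but it is not the Jacobian of $\pi$; the two quantities do not even act on spaces of the same dimension unless $\dim E^s=\dim E^u$. The correct factor requires a distortion estimate for $\log\bigl|\det\bigl(D_\cF X_{-1}|_{E^{cs}}\bigr)\bigr|$ compared at exponentially close points of the same unstable leaf, together with control of the angle between $T_{z_n}X_{-nT_0}(\Sigma_1)$ and $E^{cs}(z_n)$. Since $E^{cs}$ is in general only H\"older continuous --- unlike $E^u$ along an unstable leaf, which is the tangent bundle of a $C^2$ submanifold, which is what makes Lemma~\ref{lemma:distortion_control} easy --- this summability is the genuinely delicate step of the classical argument, and the lemma you invoke does not supply it. The architecture of your proof is the standard one the paper cites, but the central estimate must be redone with the center-stable Jacobian before the argument closes.
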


The proof is identical to the one for  hyperbolic systems, see \cite{Brin}.

\subsection{Definition of u-Gibbs states}\label{ss.udef}
The distortion control provides a map which associates to pair $(x,y)$ of points in the same leaf of $\mathcal{W}^u$ the
limit
$$\psi^u(x,y)=\lim_{T\to +\infty} \frac{Det D^u X_{-T}(y)}{Det D^u X_{-T}(x)} >0.$$ (See \cite[Theorem 11.8]{BDV}.)
Futhermore this map is uniformly Lipschitz along the unstable leaves for the distance  $d_{\mathcal{W}^u}$ along the unstable leaves.
Finally, the map $\psi^u$ depends continuously on the unstable leaves in the following sense: Let $U$ be a foliated chart of the $\cF$-unstable foliation. Then the map $\psi^u$ is continuous on the set of pairs $(x,y)\in U^2$ such that $x$ and $y$ belong to the same plaque.

We will denote $\psi^u_x=\psi^u(x,.)$.  Notice that, for $y\in \mathcal{W}^u_x$, one has $$\psi^u_y=\psi^u_y(x)\cdot \psi^u_x.$$

Given a compact domain  $D$ in an unstable leaf,  we denote by $\cM_D$ the probability measure on $D$ whose density with respect
to the unstable volume $dm^u$ is proportional to $\psi^u_x$ for $x\in D$. In other words, given $x\in D$ one gets
$$\cM_D= \frac {\psi^u_x} {\int_D \psi^u_x dm^u}\cdot (dm^u|_D)  .$$

\begin{rema}  If $U\subset V$ are two domains contained in an unstable leaf, then $\cM_U=\frac 1{\cM_V(U)}\cM_V|_U$.
The probability measure $\cM_D$ is the limit of the image by $X_T$ of the normalized volume on $X_{-T}(D)$:
$$\cM_D=\lim_{T\to +\infty} (X_T)_*\left(\frac1{m^u(X_{-T}(D)}(dm^u|_{X_{-T}(D)})\right)$$
\end{rema}

\begin{defi}
Let $\mu$ be a probability on $M$. One says that $\mu$ is a \emph{u-Gibbs state for the $(M,\cF)$-hyperbolic vector field $X$} if:
\begin{enumerate}
\item $\mu$ is invariant by the flow of $X$;
\item  for any foliated chart of the unstable foliation, the desintegration of $\mu$ along the plaques induces the
probability measure $\cM_P$ on $\mu$-almost every plaque $P$.
\end{enumerate}
\end{defi}
Let us state more precisely the second condition in the definition:  let $\varphi\colon U \to V\times
\Sigma \subset \RR^k\times \Sigma$ be a foliated chart of the  unstable foliation $\{W^u_\cF(x)\}_{x\in M}$.
Rokhlin's desintegration theorem (\cite{Rokhlin}) asserts that there is a probability measure $\theta$ on
$\Sigma$ and a family of probabilities $\{\mu_p\}_{p\in\Sigma}$ such that
$\mu_p(V\times \{p\})=1$ for $\theta$-almost every $p\in\Sigma$ such that for any continuous function $h$
with support on $V\times\Sigma$ one has

$$\int_M h d\mu = \int_\Sigma\left(\int_{U\times \{p\}} h d\mu_p\right)d\theta(p).$$
 $\mu$ is a u-Gibbs state iff $\mu_p=\cM_{V\times \{p\}}$ for $\theta$ almost every $p\in\Sigma$. In other words

$$\int_M h d\mu = \int_\Sigma\left(\int_{U\times \{p\}} h d\cM_{V\times\{p\}}\right)d\theta(p).$$

\subsection{Existence and properties of u-Gibbs states}

\begin{prop}\label{uGibb1} For any $\cF$-unstable leaf $\cL^u$  and for any measurable set   $E$ contained in $\cL^u$ having a positive $\cL^u$-Lebesgue measure $0<m^u(E)<+\infty$, every accumulation point of the sequence of probability measures

$$\mu_T := \frac{1}{T}\int_0^TX_{t*}\left(\frac{m^u_E}{m^u_E(E)}\right)dt   \hskip 3cm T \longrightarrow \infty $$
 is a u-Gibbs  state.
\end{prop}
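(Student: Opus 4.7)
The plan is to show that any weak-$\ast$ accumulation $\mu$ of $\{\mu_T\}$ is (a) $X$-invariant and (b) satisfies the u-Gibbs disintegration in every foliated chart of $\cW^u_{\cF,X}$. For (a), a Krylov--Bogolyubov computation gives, for any continuous $f\colon M\to\RR$ and any $s\in\RR$,
\[
\Big|\int f\,dX_{s*}\mu_T-\int f\,d\mu_T\Big|\leq\frac{2|s|\,\|f\|_\infty}{T},
\]
which tends to $0$ as $T\to\infty$, so $X_{s*}\mu=\mu$. For (b), set $\nu_t:=X_{t*}(m^u_E/m^u_E(E))$ so that $\mu_T=\frac1T\int_0^T\nu_t\,dt$. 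Change of variables shows that $\nu_t$ is supported on $X_t(E)$ with density $\rho_t(y)=|Det\,D^uX_{-t}(y)|/m^u_E(E)$ with respect to unstable Lebesgue on the leaf containing $X_t(E)$, so the whole task reduces to analyzing the shape of $\rho_t$.

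Next I would fix a foliated chart $U\cong V\times\Sigma$ of $\cW^u_{\cF,X}$ and decompose $X_t(E)\cap U$ into the pieces $E_{t,p}:=X_t(E)\cap(V\times\{p\})$. The distortion lemma bounds $|\log\rho_t(y_1)-\log\rho_t(y_2)|$ by $C\,d_\cF(y_1,y_2)$ uniformly in $t$ for $y_1,y_2$ in the same plaque, and the defining limit of $\psi^u$ gives $\rho_t(y_2)/\rho_t(y_1)\to\psi^u(y_1,y_2)$ uniformly on plaques as $t\to\infty$. Consequently, on every plaque $P_p=V\times\{p\}$ fully covered by $X_t(E)$ (that is, $E_{t,p}=P_p$), the normalized restriction $\nu_t|_{P_p}/\nu_t(P_p)$ converges in total variation to $\cM_{P_p}$. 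Integrating over $p$ writes the contribution of fully covered plaques to $\nu_t|_U$ as a measure of the form $\int\cM_{P_p}\,d\theta_t(p)+o(1)$, leaving as remainder the $\nu_t$-mass of the partially covered plaques.

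The main obstacle will be to bound this partial-plaque remainder. A plaque is only partially covered when it meets the leaf-boundary $X_t(\partial_{\cL^u}E)$, so the remainder is at most $\nu_t$ of a leafwise $\delta$-neighborhood of $X_t(\partial_{\cL^u}E)$, where $\delta$ is a uniform bound on the diameter of plaques of $U$. Using the change of variables $\int_A\rho_t\,dm^u=m^u(X_{-t}(A))/m^u_E(E)$ together with the fact that $X_{-t}$ contracts $E^u$ by at least $\lambda^{-t}$, this is dominated by $m^u\bigl(U_{\delta\lambda^{-t}}(\partial_{\cL^u}E)\cap E\bigr)/m^u_E(E)$, which tends to $0$ as $t\to\infty$ provided $\partial_{\cL^u}E$ has $m^u$-measure zero. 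That last condition is harmless: a general measurable $E$ can be sandwiched, up to $m^u$-error as small as one wishes, between open sets built as countable disjoint unions of open leaf balls, whose boundaries have null leaf-measure. Averaging over $t\in[0,T]$ then presents $\mu_T|_U$ as $\int\cM_P\,d\theta_T(p)+o(1)$, and the continuity of $p\mapsto\cM_P$ inherited from that of $\psi^u$ makes the set of measures of this form weak-$\ast$ closed, so any accumulation $\mu|_U$ is itself of this form. Combined with (a), this yields the u-Gibbs property of $\mu$.
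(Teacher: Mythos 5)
Your argument is correct and is essentially the paper's own route: the paper reduces a general measurable $E$ to a nice domain and then invokes the Pesin--Sinai argument of \cite[Theorem 11.8]{BDV}, which is exactly the scheme you carry out in detail (Krylov--Bogolyubov for invariance, the distortion lemma to identify the limiting densities $\psi^u$ on fully covered plaques, and the exponential contraction of $X_{-t}$ on $E^u$ to kill the mass of partially covered plaques near $X_t(\partial_{\cL^u}E)$). The only steps you assert rather than prove --- the weak-$\ast$ closedness of the set of measures with prescribed conditional densities $\cM_P$, and the reduction of a general measurable $E$ to one with null leafwise boundary --- are treated at the same level of detail in the paper (Proposition 3.8 and the citation of \cite[Lemma 11.12]{BDV}), so there is no gap.
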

\begin{dem}
As explained in the proof of \cite[Lemma 11.12]{BDV}, we can assume without loss of generality that $E$
is a domain inside an $\cF$-unstable leaf. Then, the same arguments as in the proof of Pesin and Sinai's theorem
\cite[Theorem 11.8]{BDV} yield the desired result.
\end{dem}

\begin{rema}
The conclusion of the Proposition extends to $C^2$-disks contained in a leaf $\cL$ of $\cF$ and transverse to the $\cF$-centerstable foliation $\cW$.  In order to see this, just project such a disk onto an unstable domain in some unstable leaf, along the $\cF$-center stable foliation. The disk and its projection have the same average.
\end{rema}

\begin{prop} The set  $\cG^u_{ibbs}$ of u-Gibbs states is a non-empty  closed convex subset of the space of $X$-invariant probabilities.
Given a u-Gibbs state, its ergodic components are u-Gibbs states.
\end{prop}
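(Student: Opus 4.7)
Non-emptiness is immediate from Proposition~\ref{uGibb1}: pick a compact disk of positive unstable Lebesgue measure inside an $\cF$-unstable leaf (which exists by Theorem~\ref{t.invariant-manifolds}) and extract a weak-$*$ accumulation point of the Cesaro averages $\mu_T$, using compactness of the space of probabilities on $M$. Convexity is immediate as well: for $\mu_1,\mu_2\in\cG^u_{ibbs}$ and $\alpha\in[0,1]$, $X$-invariance of $\alpha\mu_1+(1-\alpha)\mu_2$ is obvious, and by linearity of Rokhlin's disintegration in the disintegrated measure, the conditional on $\mu$-a.e.\ plaque $P$ is a convex combination of $\cM_P$ with itself, hence equals $\cM_P$.

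For closedness the plan is to rephrase the u-Gibbs condition as a weak-$*$-continuous identity. Fix a foliated chart $U\simeq V\times\Sigma$ of $\cW^u_{\cF,X}$ and, for a continuous function $h$ compactly supported in $U$, introduce the plaque-average $E_U(h)(x) := \int_{V\times\{\pi_\Sigma(x)\}} h\, d\cM_{V\times\{\pi_\Sigma(x)\}}$. Continuity of $\psi^u$ across unstable plaques (Subsection~\ref{ss.udef}) and $C^0$-continuous variation of the unstable plaques (Theorem~\ref{t.invariant-manifolds}) make $E_U(h)$ a continuous function on $U$, extending continuously by zero to $M$. Testing against product functions $h(x,p)=f(x)g(p)$ and invoking uniqueness in Rokhlin's theorem, $\mu$ is a u-Gibbs state if and only if $\int h\,d\mu = \int E_U(h)\,d\mu$ for every such $h$ in every chart $U$; both sides are continuous in $\mu$ for the weak-$*$ topology, so the identity passes to limits.

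For the ergodic-decomposition claim, let $\mu\in\cG^u_{ibbs}$ and write $\mu = \int \mu_\omega\, dP(\omega)$. The plan is to combine absolute continuity along $\cW^{cu}_{\cF,X}$, Theorem~\ref{uGibb2}, and Birkhoff's ergodic theorem. First, I would verify that $\mu$ is absolutely continuous along the center-unstable foliation: inside a flow box $W\times(-\eps,\eps)$ in which $X=\partial/\partial t$, $X$-invariance forces $\mu = \nu\otimes dt$ for some measure $\nu$ on the cross-section $W$; the u-Gibbs property, applied to unstable plaques sitting inside cross-sections, then forces $\nu$ to disintegrate along the traces of the unstable plaques in $W$ into the densities $\cM_P$, so on a center-unstable leaf $D^u\times(-\eps,\eps)$ the measure $\mu$ equals $\cM_{D^u}\otimes dt$, which is absolutely continuous with respect to leaf Lebesgue. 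Consequently $\mu(E)=1$, where $E\subset M$ is the set of full $\cW^{cu}_{\cF,X}$-Lebesgue measure given by Theorem~\ref{uGibb2}. Birkhoff's theorem then yields, for $\mu$-a.e.\ $x$, that the ergodic component of $\mu$ through $x$ coincides with $\lim_{T\to\infty}\frac 1T\int_0^T \delta_{X_t(x)}\,dt$; for $x\in E$ Theorem~\ref{uGibb2} identifies this limit as a u-Gibbs state. Hence $\mu_\omega\in\cG^u_{ibbs}$ for $P$-almost every $\omega$. The step I expect to be most delicate is the flow-box verification of absolute continuity along $\cW^{cu}_{\cF,X}$: it requires a careful match between the u-Gibbs disintegration along $\cW^u_{\cF,X}$ and the transverse-to-$X$ slicing in the flow box, appealing to uniqueness of conditional measures.
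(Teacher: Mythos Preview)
Your argument is correct. Non-emptiness, convexity, and closedness match the paper's proof; your closedness argument is just a more explicit version of the paper's one-line remark that the prescribed densities $\psi^u$ vary continuously with the plaque.

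For the ergodic-decomposition claim you take a genuinely different route. You pass through the center-unstable foliation and invoke Theorem~\ref{uGibb2}: after checking that a u-Gibbs state is absolutely continuous along $\cW^{cu}_{\cF,X}$ via a flow-box argument, you use Birkhoff to identify each ergodic component with a forward time average, which Theorem~\ref{uGibb2} then certifies as a u-Gibbs state. The paper instead stays on the unstable foliation and uses only Proposition~\ref{uGibb1}: it observes that $\mu$-almost every point is regular (forward and backward averages exist and agree), that the u-Gibbs property already gives absolute continuity along $\cW^u_{\cF,X}$, and that any two regular points on the same unstable leaf share the same limit measure (their backward orbits are asymptotic, hence backward averages agree, hence by regularity forward averages agree). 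Thus each ergodic component $\nu$ is realized as the forward average of Lebesgue on some unstable disc, which is a u-Gibbs state by Proposition~\ref{uGibb1}. The paper's route is lighter---it avoids both the flow-box computation and the heavier Theorem~\ref{uGibb2}---while your route has the virtue of making the center-unstable absolute continuity explicit, a fact of independent interest. One small caution on your flow-box step: you should make sure the foliated chart for $\cW^u_{\cF,X}$ used in the u-Gibbs definition can be taken inside a single time slice $W\times\{t\}$, which is fine since $E^u$ is transverse to $\RR X$; once that is said, the passage from the $\cW^u$-disintegration of $\mu$ to that of $\nu$ is clean.
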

\begin{demo}The fact that $\cG^u_{ibbs}$ is non empty follows from Proposition~\ref{uGibb1}. The fact that $\cG^u_{ibbs}$ is convex is straightforward. The fact that $\cG^u_{ibbs}$ is closed comes from the fact that for any $u$-Gibbs state its desintegration along the leaves has prescribed density functions $\psi^u$, which  depend continuously on the leaves.

Finally, let us show that the ergodic components of a u-Gibbs state $\mu$ are $u$-Gibbs states. Recall that a point $x$ is {\em regular} if its positive and negative average along the orbit are defined and are equal for every continuous function.  In other words, the probabilities $$\mu_T(x)=\frac1T dt|_{X_{[0,T]}(x)}\ \ \  \hbox{ and } \ \ \ \mu_{-T}(x)= \frac1T dt|_{X_{[-T,0]}(x)}$$ converge to the same probability measure $\mu_x$.  The set of regular points has full $\mu$-measure. Given a measure $\nu$ one says that a point $x$ is regular for $\nu$ if it is regular and if $\mu_x=\nu$.

The ergodic decomposition of $\mu$ is given as follows: We consider the probability measure on the space of $X$-invariant ergodic probabilities $\cP(X)$ such that the measure of a subset $\cP$ of $\cP(X)$ is the $\mu$-measure   of the union of the regular points for the elements of $\cP$. We can intersect this set of points with any measurable set of $\mu$-measure $1$.

As $\mu$ is a u-Gibbs state, there is a set $\cE$ of full $\mu$ measure of regular points  for which the intersection of $\cE$ with the unstable leaf  has total Lebesgue measure. Note that for $x$, $y$ regular in the same unstable leaf, one has $\mu_x=\mu_y$. Hence the set of regular points for any probability in the ergodic decomposition  of $\mu$ contains a subsets of full Lebesgue measure for the corresponding unstable leaf.

That is, for any ergodic component $\nu$ of $\mu$, there is at least one unstable leaf $\cL^u$ in which almost every point is
regular for $\nu$.  That is the limit of  $\mu_T(x)$ for almost every $x\in\cL^u$ is $\nu$. One deduces that the limit of
Lebesgue measure restricted to a disc in $\cL^u$ is $\nu$. But such a limit is a u-Gibbs state according to Proposition~\ref{uGibb1}. So $\nu$ is a u-Gibbs state, ending the proof.
\end{demo}

\begin{defi} An $X$-invariant probability measure $\mu$ on $M$ is \emph{ absolutely continuous in the unstable direction}
if for every laminated box $U=V_\cF\times \Sigma$ of $\cW^{u}$ such that $\mu(U)>0$ the conditional measure of $\mu\vert_{U}$
with respect to the partition into unstable plaques $\{W^u_\cF\times\{y\}:y\in \Sigma\}$ are absolutely continuous
with respect to Lebesgue measure along the corresponding plaque, for $\mu$ almost every plaque.
\end{defi}

As a consequence of  Proposition~\ref{uGibb1} and the  Proposition 3.8, one gets:
\begin{coro} If an $X$-invariant probability $\mu$ is absolutely continuous in the unstable direction, then it is a u-Gibbs state.
\end{coro}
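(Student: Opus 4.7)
The plan is to reduce to the ergodic case: I will show that $\mu$-almost every ergodic component of $\mu$ is itself a u-Gibbs state, and then reassemble $\mu$ using the closed-convex property of $\cG^u_{ibbs}$ (Proposition~3.8). Let $R\subset M$ denote the Birkhoff-regular set, that is, the full-$\mu$-measure set of $x$ at which both the forward and backward Cesaro averages
$$\frac{1}{T}\int_0^T \de_{X_t(x)}\,dt \quad \text{and} \quad \frac{1}{T}\int_{-T}^0 \de_{X_t(x)}\,dt$$
converge weakly to a common ergodic $X$-invariant probability $\mu_x$ (the two coincide $\mu$-a.e.\ since the $X_t$- and $X_{-t}$-invariant $\sigma$-algebras agree), so that $\mu = \int \mu_x\,d\mu(x)$. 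By the absolute continuity hypothesis, for $\mu$-a.e.\ $x$ one may choose an unstable plaque $P \ni x$ inside a foliation box of $\cW^u$ on which $\mu|_P = \rho_P\cdot m^u|_P$ with density $\rho_P \geq 0$. The set
$$D_P := R \cap P \cap \{\rho_P > 0\}$$
has full $\mu_P$-measure, hence positive $m^u$-measure in $P$, since $\mu_P$ and $m^u|_{\{\rho_P > 0\}}$ are mutually absolutely continuous.

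The key step is that $x \mapsto \mu_x$ is constant on $D_P$. If $x, y \in D_P$ lie in the same unstable leaf, then by $\cF$-hyperbolicity (Definition~\ref{d.F-hyp}) the backward orbits satisfy $d_\cF(X_{-t}(x), X_{-t}(y)) \to 0$ exponentially as $t \to +\infty$, so for every $\vfi \in C(M)$
$$\left|\frac{1}{T}\int_{-T}^0 \bigl(\vfi(X_t(x)) - \vfi(X_t(y))\bigr)\,dt\right| \longrightarrow 0, \quad T \to \infty;$$
hence $x$ and $y$ have the same backward Birkhoff average. Since both are regular, their backward and forward averages coincide, giving $\mu_x = \mu_y =: \nu_P$.

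To finish, Fubini and dominated convergence (the integrand is bounded by $\|\vfi\|_\infty$) give, for every $\vfi \in C(M)$,
$$\int \vfi\, d\!\left(\frac{1}{T}\int_0^T X_{t*}\!\left(\frac{m^u|_{D_P}}{m^u(D_P)}\right) dt\right) = \int_{D_P} \frac{1}{T}\int_0^T \vfi(X_t(x))\,dt\,\frac{dm^u(x)}{m^u(D_P)} \longrightarrow \int \vfi\, d\nu_P,$$
so these Cesaro pushforwards converge weakly to $\nu_P$; Proposition~\ref{uGibb1} then forces $\nu_P \in \cG^u_{ibbs}$. Therefore $\mu_x$ is a u-Gibbs state for $\mu$-a.e.\ $x$, and closed convexity of $\cG^u_{ibbs}$ gives $\mu = \int \mu_x\,d\mu(x) \in \cG^u_{ibbs}$. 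The most delicate point is the equality $\mu_x = \mu_y$ on $D_P$: it combines forward-backward equality of Birkhoff averages for regular points with the backward contraction of pairs on the same unstable leaf, and this is precisely what converts absolute continuity in the \emph{unstable} direction into the prescribed Pesin--Sinai density.
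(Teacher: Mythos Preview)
Your proof is correct and follows essentially the same route the paper indicates: the paper states the corollary as a direct consequence of Proposition~\ref{uGibb1} and Proposition~3.8, and the argument you wrote out is precisely the natural adaptation of the ``ergodic components'' part of the proof of Proposition~3.8 (regular points, constancy of $\mu_x$ along unstable leaves via backward contraction, then Proposition~\ref{uGibb1} applied to a positive-$m^u$-measure set in a leaf). The only difference is that absolute continuity gives you positive rather than full $m^u$-measure on plaques, which is exactly what Proposition~\ref{uGibb1} allows.
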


\subsection{Almost every orbit accumulates on $u$-Gibbs states}
We will now prove Theorem~\ref{uGibb2}, which says that the $X$-orbit of Lebesgue almost every $x\in M$ distributes
according to a $u$-Gibbs state.

\begin{demt}
The proof follows the line of argument of the proof of the similar statement (\cite[Theorem 11.16]{BDV})
where one is assuming partial hyperbolicity. In the present case, we do not have the hypothesis of an invariant transversal
bundle to the $\cF$-unstable foliation $\cW^u$ for which the flow $X$ is expanding less than in
$\cW^{cu}$. What we do have is
the    foliation $\cF$.

We first fix an unstable domain $D$, that is a disc contained in an $\cF$-unstable leaf $\cL$. We want to show that, for almost every point $x\in D$ every accumulation point of the measures $\mu_T(x)=\frac1T dt|_{X_{[0,T]} (x)}$ is a u-Gibbs state.  That is, the mass of $\mu_T(x)$, for large $T$, is well distributed along the unstable leaves according to the densities $\psi^u_x$.  We will now formalize this idea.

Let $U\subset M$ be a $\cW^u$-foliated chart endowed with coordinates so that $U=B\times Z$ where the
$B\times\{ z\}$, $z\in Z$ are unstable discs ; one chooses this foliated chart so that it is also a flow box for $X$: the set $U$ is of the form $X_{[0,1]}(B\times Y)$, where $\Sigma= B  \times Y$ is a cross section of $X$.  We require furthermore that the unstable discs $B\times \{y\}$, $y\in Y$ are unstable discs of the same volume.

In order to understand the measure $\mu_T(x)$ in $U$ we have to understand the returns of the $X$-orbit
of $x$ in $U$. Notice that each component of a return map of the disc $D$ on $\Sigma$ is at constant time and is contained in a plaque $B\times \{y\}$: the flow preserves the $\cF$-unstable leaves, $D$ is contained in a $\cF$-unstable  leaf, and $\Sigma$ is foliated by $\cF$-unstable plaques.

One considers a disc $A\subset B$ \emph{far from the boundary of $B$}: $A$ is a subdisc centered at the same point as $B$ but with a radius $10$ times smaller. Up to shrinking $Y$ one may assume that for every
$y\in Y$
the $A\times \{y\}$ have the same volume $m_{B\times\{y\}}(A\times \{y\})$ as $A$:
$$  m_{B\times\{y\}}(A\times \{y\})=m_B(A).$$

The proof consists in showing that for almost every point $x$ of $D$ every accumulation points of the measures $\mu_T(x)=\frac1T dt|_{X_{[0,T]} (x)}$ gives $A$ a weight smaller than  $K m_B(A)$ where  $K$ is a constant independent of $A$. That is expressed in the following proposition which is Proposition 11.17 in \cite{BDV}.

Always following \cite{BDV}, let $K_0=\frac{m_B(A)}{m_B(B)}C$, where $C$ is the distortion control bound given in Lemma \ref{lemma:distortion_control}.

\begin{prop}\label{prop1}
Let $D$ be any unstable domain contained in a leaf of $\cW^u$. There exists $c>0$
such that given any subdisc $A$ far from the boundary of $B$ there exists
$n_0$ such that for $T\geq n_0$
$$m_D(\{x\in D \ : \ \mu_T(x)X_{[0,1]}(A \times Y) \geq 10K_0\frac{m_B(A)}{m_B(B)}\})\leq e^{-cT}.$$
\end{prop}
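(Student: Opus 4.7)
The plan is to adapt the strategy of \cite[Prop.\ 11.17]{BDV} to this setting. The structural advantage here is that since $D$ lies in an $\cF$-unstable leaf $\cL^u$ and the flow preserves the $\cF$-unstable foliation, every connected component of a return of $D$ to $\Sigma = B\times Y$ lies in, and in fact maps diffeomorphically onto, a full unstable plaque $B\times\{y\}$. This replaces the role played by the strong unstable foliation in the classical partially hyperbolic setting.

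First I would reduce the statement to a counting problem on returns. Writing $U=X_{[0,1]}(\Sigma)$ and letting $\tau_1(x)<\tau_2(x)<\cdots$ be the successive times at which the forward orbit of $x\in D$ meets $\Sigma$, the quantity $\mu_T(x)\bigl(X_{[0,1]}(A\times Y)\bigr)$ equals, up to an $O(1/T)$ boundary term, the fraction of the first $N_T(x)$ returns of $x$ that land in $A\times Y$. Here $N_T(x)\geq c'T$ uniformly in $x$, since return times to $\Sigma$ are bounded away from $0$ and from $\infty$ inside the flow box $U$.

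Next, let $\{D_n^i\}_i$ denote the connected components of the set of $x\in D$ for which $\tau_n(x)$ is defined. Each $D_n^i$ is an unstable subdisc of $D$ on which $X_{\tau_n}$ is a diffeomorphism onto a full plaque $B\times\{y_n^i\}$. Lemma \ref{lemma:distortion_control}, applied to $X_{-\tau_n}$ starting from $B\times\{y_n^i\}$, bounds the oscillation of the unstable Jacobian of $X_{\tau_n}$ on $D_n^i$ by $e^{C\,\mathrm{diam}(B)}$, independent of $n$. Consequently, for $A$ far from $\partial B$,
$$
\frac{m_D\bigl(D_n^i\cap\{X_{\tau_n}(\cdot)\in A\times Y\}\bigr)}{m_D(D_n^i)} \;\leq\; K_0\,\frac{m_B(A)}{m_B(B)} \;=:\; p_A .
$$
Setting $\chi_n(x)=\mathbf{1}_{X_{\tau_n}(x)\in A\times Y}$, the variables $(\chi_n)$ are, under the normalized Lebesgue measure on $D$, stochastically dominated by a sequence of independent Bernoulli variables with parameter $p_A$ in the conditional sense: the past $(\chi_1,\ldots,\chi_{n-1})$ determines the component $D_n^i$ containing $x$, and conditionally on this the probability that $\chi_n=1$ is at most $p_A$. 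A Chernoff/Bernstein large deviation estimate applied to $\sum_{n=1}^N \chi_n$ with $N=N_T(x)$ and deviation level $10\,p_A\,N$ then yields the desired bound of the form $e^{-cT}$.

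The main point requiring care is the small fraction of $D$ whose $n$-th iterate lands in $\Sigma$ but not in a \emph{full} plaque $B\times\{y\}$ (components straddling $\partial B$), or whose orbit has escaped the flow box $U$ altogether. Since $A$ is chosen far from $\partial B$, these "boundary" components cannot contribute to the event $\chi_n=1$; moreover, the unstable expansion forces the total $m_D$-mass of points producing such a boundary event at step $n$ to decay geometrically in $n$, so their contribution is absorbed into the Chernoff rate. The remainder of the argument is a direct transcription of the classical one.
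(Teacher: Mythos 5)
Your overall strategy is the same as the paper's: reduce $\mu_T(x)\bigl(X_{[0,1]}(A\times Y)\bigr)$ to counting returns to $\Sigma$ that land in $A\times Y$, use the distortion Lemma \ref{lemma:distortion_control} to bound the conditional proportion of each return component that hits $A\times Y$ by $K_0\,m_B(A)/m_B(B)$, and conclude with a binomial large-deviation estimate (your Chernoff bound is the probabilistic rephrasing of Lemma \ref{l.proportion} plus Stirling). The gap is in your treatment of the non-Markovian returns, which is precisely the delicate point. You assert, first, that every connected component of a return of $D$ to $\Sigma$ maps diffeomorphically \emph{onto} a full plaque $B\times\{y\}$, and later, that the components whose image straddles $\partial B$ ``cannot contribute to the event $\chi_n=1$'' because $A$ is far from $\partial B$. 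Neither claim holds. The flow preserves the $\cF$-unstable foliation, so the image of a component is \emph{contained in} the unstable plaque through some $B\times\{y\}$, but nothing forces it to cover that plaque: the image is an unstable disc whose boundary may pass through the interior of $B\times\{y\}$. Such a partial return can perfectly well cover $A\times\{y\}$ (say, a half-disc through the center of $B$) while covering only a small portion of $B\times\{y\}$; for such a component the ratio $m_D\bigl(D_n^i\cap X_{\tau_n}^{-1}(A\times Y)\bigr)/m_D(D_n^i)$ can be close to $1$, and your key inequality bounding it by $p_A$ fails, because the distortion lemma compares this ratio to $m(A\times\{y\})/m\bigl(X_{\tau_n}(D_n^i)\bigr)$, not to $m_B(A)/m_B(B)$.

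The correct repair --- the content of the argument of \cite[Lemma 11.19]{BDV} that the paper invokes --- is not to discard the straddling components but to replace the time-one map by a large iterate whose unstable expansion dominates the ratio of the radii of $B$ and $A$, and then to enlarge $A$ and $B$ slightly to $\tilde A\subset\tilde B$ so that any return component whose image meets $A\times Y$ has its enlarged image covering all of $\tilde B\times\{y\}$; the distortion estimate is then applied to these complete crossings of $\tilde B$. The hypothesis that $A$ is far from $\partial B$ is what makes this enlargement possible; it does not prevent partial returns from hitting $A$. Your secondary claim that the mass of points producing a straddling return at step $n$ decays geometrically is plausible but is not how the argument closes, and would not by itself restore the conditional bound by $p_A$ on the event $\chi_n=1$. (A minor further point: your assertion that $N_T(x)\geq c'T$ uniformly in $x$ is false, since an orbit may stay away from $\Sigma$ for arbitrarily long stretches; only the upper bound $N_T(x)\leq T$, coming from the fact that consecutive returns are separated by at least the time-length of the flow box, is actually needed to pass from the number of hits to the time average.)
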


In order to prove Proposition~\ref{prop1} one first shows:

\begin{lemm}\label{l.proportion} Let $1\leq k\leq n$ and $0 \leq n_1 < \ldots < n_k<n$ be fixed. Then the $m_D$-measure of the set
of points $x \in D$ which hit $A\times Y$ at times $n_1,\dots n_k$ is bounded by
$(K_0\frac{m_B(A)}{m_B(\bar B)})^k$.
\end{lemm}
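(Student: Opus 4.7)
The plan is to prove the lemma by induction on $k$, with each additional constraint $X_{n_j}(x)\in A\times Y$ contributing a multiplicative factor of roughly $K_0\cdot\frac{m_B(A)}{m_B(B)}$. Writing $F_j := \{x\in D : X_{n_i}(x)\in A\times Y \text{ for all } 1\le i\le j\}$, the base case $F_0 = D$ is immediate, and the goal of the inductive step is the per-step estimate
$$m_D(F_j)\;\le\; K_0\cdot\tfrac{m_B(A)}{m_B(B)}\cdot m_D(F_{j-1}).$$

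For the inductive step, I would push $F_{j-1}$ forward by $X_{n_j}$. Because $D$ lies in a leaf of $\cW^u$ and the flow preserves $\cW^u$, the image $X_{n_j}(F_{j-1})$ is contained in the unstable plaques $B\times\{y\}$ of $\Sigma = B\times Y$. I would decompose $F_{j-1}=\bigsqcup_{y} F_{j-1}^{y}$ according to which plaque $B\times\{y\}$ a point is sent into at time $n_j$, and set $F_j^{y} := F_{j-1}^{y}\cap X_{n_j}^{-1}(A\times\{y\})$. Since $X_{n_j}(F_{j-1}^{y})$ sits in a single plaque of uniformly bounded $d_\cF$-diameter, the distortion control of Lemma \ref{lemma:distortion_control} gives oscillation of $|\det D^u X_{n_j}|$ on $F_{j-1}^{y}$ bounded by the constant $C$, whence
$$\frac{m_D(F_j^{y})}{m_D(F_{j-1}^{y})}\;\le\; C\cdot \frac{m^u\bigl(X_{n_j}(F_j^{y})\bigr)}{m^u\bigl(X_{n_j}(F_{j-1}^{y})\bigr)}\;\le\; C\cdot \frac{m_B(A)}{m^u\bigl(X_{n_j}(F_{j-1}^{y})\bigr)}.$$

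The hard part of the argument is the denominator: a priori $X_{n_j}(F_{j-1}^{y})$ need not cover the whole plaque $B\times\{y\}$, so one cannot naively replace it by $m_B(B)$. This is precisely where the hypothesis that $A$ is \emph{far from the boundary of $B$} (a concentric sub-disc of radius ten times smaller) is used. For any connected component of $X_{n_j}(F_{j-1}^{y})$ that meets $A\times\{y\}$, the uniform unstable expansion of the flow acting on the original unstable disc $D$ forces the component to either cover the full plaque $B\times\{y\}$ or else stretch from $A\times\{y\}$ all the way to $\partial B\times\{y\}$; in both cases its unstable volume is at least a definite fraction of $m_B(B)$. Summing these lower bounds over connected components and combining with the distortion estimate gives the per-step bound with constant $K_0\cdot m_B(A)/m_B(B)$, after which iterating $k$ times yields the claimed estimate. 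The rest is bookkeeping; the genuine obstacle lies in the connected-component/covering argument, which requires the ``far from the boundary'' geometry and the uniform expansion in the unstable direction to exclude components that could be entirely swallowed inside $A\times\{y\}$.
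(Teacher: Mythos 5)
Your skeleton --- induction on $k$, a per-step ratio estimate obtained by pushing forward to the plaque $B\times\{y\}$ and invoking the distortion control of Lemma \ref{lemma:distortion_control} --- is exactly the skeleton of the argument the paper sketches (following \cite[Lemma 11.19]{BDV}), and you correctly locate the crux in the non-Markovian, partial returns. But your resolution of that crux is wrong. A connected component of $X_{n_j}(F_{j-1}^{y})$ is bounded not only by $X_{n_j}(\partial D)$ and $\partial(B\times\{y\})$, but also by the sheets $X_{n_j-n_i}\bigl(\partial(A\times Y)\bigr)$ for each earlier constraint time $n_i<n_j$, since $F_{j-1}$ is an intersection of preimages of $A\times Y$. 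After strong expansion these sheets are nearly flat hypersurfaces that can pass right through the interior of $A\times\{y\}$; two of them, coming from different earlier times and arriving close and nearly parallel, cut out a thin slab that meets $A\times\{y\}$ yet neither covers $B\times\{y\}$ nor has volume comparable to $m_B(B)$. (Even your second alternative, ``stretches from $A$ to $\partial B$,'' gives no volume lower bound when $\dim E^u\ge 2$: a thin slab through the center does stretch to $\partial B$ but has volume of order $\varepsilon\, r_B^{d-1}$.) For such a component the ratio of the $A$-part to the whole is of order $r_A/r_B$ rather than $m_B(A)/m_B(B)$, so your per-step inequality fails, and with it the induction. The ``far from the boundary'' hypothesis only neutralizes the cuts coming from $\partial D$ and $\partial B$; it says nothing about the accumulated interior cuts from times $n_1,\dots,n_{j-1}$.

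The paper's (i.e.\ \cite{BDV}'s) way out is different: one replaces $X_t$ by a large-time iterate so that the expansion dominates the ratio of the radii, and enlarges $A\subset\tilde A\subset B\subset\tilde B$ so that every return of $A$ meeting $A$ forces the corresponding return of $\tilde B$ to cover a full enlarged plaque. This restores a Markov-type structure: the sets tracked in the induction are always unions of essentially full plaques, the denominators in the ratio estimates are genuinely of size $m_B(B)$, and the partial returns --- which correspond to points exponentially close to $\partial A$ --- are absorbed into the enlargement rather than lower-bounded component by component. To repair your proof you would need to either implement this Markov-ification or show directly that the thin components contribute negligibly in aggregate; neither is present in your argument.
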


Proposition~\ref{prop1} is  deduced from Lemma~\ref{l.proportion} by using Stirling's formula exactly  as in \cite[Lemma 11.20]{BDV}
Let us just give the idea of the proof of Lemma~\ref{l.proportion} (\cite[Lemma 11.19 ]{BDV})

\begin{demo}First assume (even if it is not realistic) that the returns of the plaques $A\times \{y\}$ in
$A\times Y$ are Markovian, that is, each time $X_t(A\times \{y\})\cap (A\times Y)\neq \emptyset$ it consists in a plaque of $A\times Y$. Then the distortion lemma implies that for each sequence $n_1,\ldots,n_i$  of return times of a plaque $A\times\{y\}$ to $A\times Y$, the  proportion of points having a return  in $A\times Y$ at time $n_{i+1}$ will be exactly $\cM_D(A)$ times the proportion of points for which $n_{i+1}-n_i$ is a return time to $A\times Y$ leading to the announced estimates.

\vskip 3mm
However, the returns are not Markovian and we need to deal with returns of $A\times \{y\}$ which are only partially crossing $A\times Y$. However, this partial returns correspond to returns of points very close (exponentially close) to the boundary of $A$, because of the uniform dilation of $A$ by the dynamics. \cite[Lemma 11.19]{BDV} shows that replacing $X_t$ by some large iterates so that the expansion will be very large in comparison with the ratio of the radius of $B$ and $A$, one can increase a little bit $A$ and $B$ in $\tilde A\subset B$ and $\tilde B$ so that for every return of $A$ in $B$ the return of $\tilde B$ covers the whole $\tilde B$.  This allows us to neglect the effect of the non complete return.
\end{demo}

Proposition \ref{prop1}, together with a Borel-Cantelli argument, proves that given
any disk $A\subset B$
far from the boundary of $B$, there exists a full $m_D$-measure subset of points $x \in D$ such that
$$\mu_T(x) (X_{[0,1]}(A\times Y)) \leq 10K_0\frac{m_B(A)}{m_B(B)}$$
for all but, at most, a finite length set in of $T\in [0,\infty)$.  Consequently,
the same upper bound holds for $\mu(X_{[0,1]}(A \times Y))$, for any accumulation point $\mu$
of the measures $\mu_T$.
Applying the conclusion to a countable generating family of disks far from the boundary of $B$, and then taking the intersection of the corresponding full measure subset of $D$, we get a full $m_D$-measure subset of points $x \in D$
for which
$$\mu(X_{[0,1]}(A \times Y)) \leq 10K_0 \frac{m_B(A)}{m_B(B)}$$
for any measurable set $A$ far from the boundary of $B$.

To complete the proof, cover the manifold by foliated boxes $X_{[0,1]}(B\times Y)$
such that the foliated boxes obtained by replacing each $X_{[0,1]}(B\times Y)$ by  $X_{[0,1]}(A  \times Y)$ --where $A$
is, as before, far from the boundary of $B$--
 still covers $M$.
If $\mu(X_{[0,1]}(B\times Y))=0$ there is nothing to prove. Otherwise, by the
previous paragraph,
$$\frac{\mu(X_{[0,1]}(A \times Y))}{\mu (X_{[0,1]}( B \times Y))} \leq
\frac{10K_0}{\mu (X_{[0,1]}( B \times Y))}
\frac{m_B(A)}{m_B(B)}
$$
for every disk $A \subset B$ far from the boundary. This implies that $\mu$ is a u-Gibbs state, as claimed.
\end{demt}

 \section{Laminations by leaves of negative curvature}

\subsection{u-Gibbs states}

In this section, we consider laminations  $\cF$ whose leaves have dimension $n$, admit a metric with strictly negative curvature, and  the metric depends continuously on the plaques for the $C^r$-topology, $r\geq 3$.
We denote by $(T^1\cF,  \tilde\cF)$ the unit tangent bundle to   the foliation,   which is itself a space laminated by (2n-1)-dimensional leaves, each leaf being the unit tangent bundle of a leaf of $(M,\cF)$ which carries a natural laminated Riemannian metric coming from the Riemannian metric on $M$ and the unit spherical metric on
the tangent vectors to the lamination.
Let us call $X$ the vector field on $T^1\cF$ generating the laminated geodesic flow $X_t$, $\pi:T^1\cF \to M$ the canonical projection, and $\pi_*$ the map it induces between measures on $T^1\cF$ and measures on $M$.
The fact that the metric tensor varies from leaf to leaf in a continous way with respect to the $C^r$ topology implies that the flow $X_t$, which has continous dependence on the derivatives of the metric up to order two, is a continuous flow on the space $T^1\cF$.

\vskip3mm
Then the foliated geodesic flow defined on $T^1\cF$ is uniformly $C^2$ in the leaves and is foliated hyperbolic.
Then Theorem~\ref{uGibb2} implies that  at every point $x\in M$ and for Lebesgue almost every direction $v\in T^1_x\cF$ every accumulation point of the probabilities associated to the positive orbit of the geodesic flow is a $u$-Gibbs state.

\vskip3mm
 The set of probabilities $\cG^u_{ibbs}$  on $T^1\cF$ which are $u$-Gibbs states for $X$ is a closed convex subset of the set of probability measures invariant under the geodesic flow. For every $u$-Gibbs state, its decomposition in ergodic components is through ergodic $u$-Gibbs states (Proposition 3.8).

As a direct consequence, the projection on $M$ of these $u$-Gibbs states are the probability measures which describe the asymptotic behavior of almost all geodesics in each plaque. In the case of hyperbolic leaves, these measures coincide with the harmonic measures (\cite{Connell-Martinez}). However, in the case of variable curvature, these measures are \emph{a priori} no longer harmonic measures. These measures form a new class of  measures which are naturally associated to the lamination and are interesting for understanding the dynamics of the lamination $\cF$. They have also been studied by Alvarez in \cite{A3}, \cite{A1} and \cite{A2}, where their relations with other classes of measures --e.g. harmonic measures-- is clearly explained.

\begin{theor}\label{variable_curvature} Let $(M,\cF)$ be a $C^3$-lamination
with  a plaque Riemannian metric of negative sectional curvature and let
$\mu$ be a $u$-Gibbs state of the geodesic flow on $T^1\cF$.  Let $\pi\colon T^1\cF\to M$ be the natural projection.  Then
 $\nu:= \pi_*\mu$ is a probability measure on $M$  whose desintegration $\nu_k$ along the plaques of $\cF$ satisfies:
 \begin{itemize}
 \item
 $\{\nu_k\}$ are absolutely continuous with respect to the Lebesgue measure of the plaques of $\cF$.
 \item The density of $\nu_k$ with respect to Lebesgue on the plaques is strictly positive on the whole plaque, and its logarithm is uniformly Lipschitz along the plaques on the whole manifold.
 \end{itemize}

 In particular the support of $\nu$ is $\cF$ invariant (it consists of entire leaves).

\end{theor}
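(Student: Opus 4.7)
My plan is to reduce the statement, via Rokhlin's disintegration theorem applied to the partition of $T^1\cF$ into $\tilde\cF$-plaques, to its analogue on a single leaf: if $\mu_L$ is the measure on $T^1 L$ induced by disintegrating $\mu$ along one plaque of $\tilde\cF$, then $\pi_*\mu_L$ is absolutely continuous on $L$ with a strictly positive density whose logarithm is uniformly Lipschitz (with a constant not depending on $L$). The $u$-Gibbs property transports by uniqueness of disintegration, so each $\mu_L$ has conditionals $\psi^u\,dm^u$ on the unstable plaques of the geodesic flow in $T^1 L$, and flow invariance of $\mu$ passes to $\mu_L$.

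The key geometric observation is that inside the $(2n-1)$-dimensional manifold $T^1 L$ the $(n-1)$-dimensional sphere fibers $T^1_p L$ of $\pi$ are transverse to the $n$-dimensional center-unstable foliation $\cW^{cu}_{\tilde\cF,X}$. Indeed, $D\pi$ sends $E^u\oplus\RR X$ isomorphically onto $T_p L$: the flow vector at $v$ projects to the geodesic's velocity, and $E^u(v)$ projects to the tangent space of the horosphere through $p=\pi(v)$; in negative curvature these span all of $T_p L$. Hence one has a local product decomposition $T^1 L\simeq T^1_{p_0}L\times W^{cu}_{loc}(v_0)$ in which every local cu-plaque $\{\xi\}\times W^{cu}_{loc}(v_0)$ maps diffeomorphically onto a neighborhood of $p_0$ in $L$ via $\pi$.

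Combining the $u$-Gibbs property with flow invariance, $\mu_L$ has absolutely continuous conditionals on cu-plaques with a strictly positive density $f_\xi$ equal to $\psi^u$ corrected by the smooth positive Jacobian relating unstable-times-flow Lebesgue to cu-Lebesgue; the distortion bound of Lemma~\ref{lemma:distortion_control} gives that $\log f_\xi$ is uniformly Lipschitz along cu-plaques with a constant depending only on the lamination and $X$. Pushing each cu-conditional forward by $\pi$, a local diffeomorphism on each cu-plaque, and integrating against the transverse measure $\bar\sigma$ on $T^1_{p_0}L$ produced by the disintegration of $\mu_L$ along $\cW^{cu}_{\tilde\cF,X}$, yields that $\pi_*\mu_L$ has a density
$$g(q)=\int \tilde f_\xi(q)\,d\bar\sigma(\xi),$$
where each $\tilde f_\xi$ is continuous and uniformly log-Lipschitz in $q$. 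Dominated convergence makes $g$ continuous, and the same Lipschitz constant is inherited by $\log g$ via the elementary estimate $\tilde f_\xi(q_1)\le e^{Cd(q_1,q_2)}\tilde f_\xi(q_2)$.

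The main obstacle, and the point at which negative curvature is used in an essential way, is promoting the local positivity of $g$ (immediate near the projection of $\mathrm{supp}(\mu_L)$) to positivity on the whole leaf. For this I would use that (a) the cu-conditionals have everywhere-positive density, so $\mathrm{supp}(\mu_L)$ is saturated by cu-leaves of $\cW^{cu}_{\tilde\cF,X}$, and (b) every cu-leaf in $T^1 L$ projects surjectively onto $L$. Fact (b) is the horospherical covering property in negative curvature: in the universal cover $\tilde L$, any ideal boundary point is reached by an incoming geodesic from any basepoint, so the lift of a cu-leaf projects diffeomorphically onto $\tilde L$, and this descends through $\tilde L\to L$. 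Consequently, whenever $\mu_L\neq 0$, every fiber $T^1_q L$ meets $\mathrm{supp}(\mu_L)$; applying the local chart argument at a point of $T^1_q L\cap\mathrm{supp}(\mu_L)$ produces positivity of $g$ in a neighborhood of $q$, and varying $q$ gives $g>0$ on all of $L$. The support of $\nu$ therefore consists of whole leaves.
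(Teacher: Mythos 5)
Your proposal is correct and follows essentially the same route as the paper: both arguments rest on the facts that the center-unstable plaques project (via $\pi$, after lifting to the universal cover) diffeomorphically onto the leaf, that the cu-conditionals of a $u$-Gibbs state have strictly positive densities with uniformly Lipschitz logarithm (from the prescribed $\psi^u$ together with flow-invariance and the bounded Jacobian of the flow on cu-plaques), and that averaging a family of positive functions with a common log-Lipschitz constant preserves that constant. Your explicit surjectivity argument for global positivity is just an unpacking of the paper's statement that the lifted cu-leaf maps diffeomorphically onto $\tilde\cL_x$.
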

\begin{demo}Let $\cL_x$, $x\in M$  be a leaf of $\cF$ and consider a direction $v\in T^1_x\cL_x$.  Consider now the center-stable manifold $W^{cu}(v)$ of $v$ for the geodesic flow.  It consists of all the vectors in $T^1_x\cL_x$ whose corresponding geodesic tends to the same point of the boundary at infinity of the universal cover $\tilde \cL_x$.  We fix a lift $\tilde x$ of $x$ on $\tilde \cL_x$.

$W^{cu}(v)$ is either diffeomorphic to an Euclidian space (if $v$ does not belong to the stable manifold of a closed orbit) or to a cylinder. We denote by $\tilde W^{cu}(v)$ its universal cover and we fix a lift $\tilde v$ of $v$. The natural projection of $\tilde W^{cu}(v)$ on  $\tilde \cL_x$ sending $\tilde v$ on $\tilde x$ is a diffeomorphism. Hence we endow every center unstable plaque with the lifted metric.

The desintegration of $\mu$ along center-stable plaques has a prescribed positive density whose logarithm is uniformly Lipschitz along the plaques.  Form the invariance of the metric by the geodesic flow and the fact that the action of the geodesic flow on the centralstable plaque has a bounded Jacobian, one deduces that the desintegration of $\mu$ on centerstable plaques has a logarithm which is uniformly Lipschitz with a prescribed constant.

If a family of positive functions has their logarithm which is Lipschitz for a given constant, then its sum or average has the same property. One deduces that the projection of $\mu$ on $\tilde \cL_x$ and therefore on $\cL_x$ has a density with respect to the Lebesgue measure whose logarithm is uniformly Lipschitz. As a consequence, the density is bounded away from $0$ and $+\infty$ in all compact domains.
\end{demo}

\subsection{Laminations by hyperbolic leaves}

We will now take $M$ to be a compact metrizable space with a lamination $\cF$ by   hyperbolic manifolds of dimension $n$ (each leaf carries its   metric of sectional curvature -1), such that the variation of the metric, as well as its derivatives, is continuous in the transverse directions.
The unstable lamination in $(T^1\cF, \tilde\cF)$ will be called the unstable-horosphere lamination $\cW^u$.
The laminated geodesic flow is preserving the foliation $\cW^u$ (but not the horospheres themselves), being a uniform expansion on the horospheres.
By a {\em horocycle measure} we mean a measure on $T^1\cF$ such that its desintegration with respect to $\cW^u$ is a constant multiple of the volume of the horospheres. In this case, they coincide with the $u$-Gibbs states, since in fact the unstable Jacobian equals $n-1$ so $\psi^u(x,y)\equiv 1$.

\begin{theor}\label{teoMatilde} (\cite{Bakhtin-Martinez}, \cite{Connell-Martinez})
Let $(M,\cF)$ be a  compact lamination by hyperbolic manifolds,  $\pi_*$ induces a bijective correspondence between horocycle measures on $T^1\cF$ which are invariant under the laminated geodesic flow and harmonic measures on $(M,\cF)$.
\end{theor}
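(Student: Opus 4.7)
The plan is to prove the bijection by verifying that $\pi_*$ sends $X$-invariant horocycle measures on $T^1\cF$ to harmonic measures on $M$, and by constructing an explicit inverse using the Poisson representation of positive harmonic functions on the universal cover of each leaf. Starting with an $X$-invariant horocycle measure $m$, I would lift to the universal cover $\tilde\cL_\tau$ of each plaque in a foliated chart. In $T^1\tilde\cL_\tau$ the strong unstable leaves are $H_\xi=\{v:v^-=\xi\}$ for $\xi\in\partial_\infty\tilde\cL_\tau$, and the natural map $\pi\colon H_\xi\to\tilde\cL_\tau$ is a bijection. A computation with Busemann functions in constant curvature $-1$ identifies the pushforward of horospherical volume on $H_\xi$ with the measure $P_\tau(x,\xi)\,d\mathrm{vol}_{\tilde\cL_\tau}(x)$, where $P_\tau(x,\xi)=e^{-(n-1)\beta_\xi(x,x_0)}$ is the Poisson kernel. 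Disintegrating $m$ along $\cW^u$ gives a family of transverse boundary measures $\{\eta_\tau\}$ on $\partial_\infty\tilde\cL_\tau$, and integrating the pushforward against $\eta_\tau$ produces the density $h_\tau(x)=\int P_\tau(x,\xi)\,d\eta_\tau(\xi)$, which is positive harmonic on $\tilde\cL_\tau$ by classical Poisson theory. Descending via the deck group and integrating over $\tau$ shows that $\pi_* m$ has a leafwise-harmonic density, hence is a harmonic measure in Garnett's sense.

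For the reverse direction, given $\nu$ harmonic, Garnett's local structure writes $\nu=\int h_\tau\,d\mathrm{vol}_{\cL_\tau}\,d\lambda(\tau)$ in every foliated chart with $h_\tau$ positive harmonic on plaques. Lift each $h_\tau$ to $\tilde\cL_\tau$ and apply uniqueness of the Poisson representation to obtain a unique positive measure $\eta_\tau$ on $\partial_\infty\tilde\cL_\tau$ with $h_\tau(x)=\int P_\tau(x,\xi)\,d\eta_\tau(\xi)$. Using $\eta_\tau$ as transverse measure to the unstable foliation and placing horospherical Lebesgue on each $H_\xi$ builds a measure on $T^1\tilde\cL_\tau$; descending and assembling over $\tau$ yields a horocycle measure $m$ on $T^1\cF$ with $\pi_* m=\nu$. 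Invariance of $m$ under the geodesic flow is automatic: $X_t$ preserves each $H_\xi$ and scales horospherical volume by $e^{(n-1)t}$, which is exactly the factor absorbed by the Poisson weighting built into the construction. Uniqueness of the Poisson representation makes the two constructions mutually inverse.

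The main obstacle is the central pushforward identification in the forward direction: identifying $\pi_*(\mathrm{Leb}_{H_\xi})$ with the Poisson-kernel measure on $\tilde\cL_\tau$. This is a classical hyperbolic-geometry computation with Busemann functions, but in the laminated setting it must be carried out so that everything varies continuously in $\tau$; this continuity is guaranteed by the hypothesis that the leafwise hyperbolic metric and its derivatives vary continuously across the transversal. Once this identification and Poisson uniqueness are in place, the remainder of the argument is bookkeeping over the foliated chart and the deck group action, and the passage back from the universal cover to $\cL_\tau$ is routine since all leafwise constructions are deck-equivariant by construction.
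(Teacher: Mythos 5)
The paper gives no proof of this theorem; it is quoted from \cite{Bakhtin-Martinez} and \cite{Connell-Martinez}, and your sketch is precisely the argument of those references: the Busemann-function computation identifying the projection of the (flow-invariant) conditional on each leaf $H_\xi=\{v:v^-=\xi\}$ with the Poisson kernel $e^{-(n-1)\beta_\xi}$ times leaf volume, combined with Garnett's local structure theorem and uniqueness of the Poisson representation of positive harmonic functions to invert the correspondence. The only slip is terminological: your $H_\xi$ is the center-unstable (weak unstable) leaf, not the strong unstable one --- the strong unstable leaf is a single horosphere and does not project onto $\tilde\cL_\tau$ --- but the set you actually use and the computation you describe are the correct ones.
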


\begin{coro}
Every Gibbs u-state on $T^1\cF$ projects to a harmonic measure on $M$.
\end{coro}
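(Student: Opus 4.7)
My plan is to exploit two ingredients already assembled in the text: (i) on leaves of constant curvature $-1$ the unstable Jacobian of the geodesic flow is a universal constant along each unstable horosphere, so every u-Gibbs state is automatically a horocycle measure; (ii) Theorem~\ref{teoMatilde} transports horocycle measures to harmonic measures. The corollary then follows by chaining these two facts.

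For step (i) I would first verify that $\psi^u\equiv 1$ on every unstable leaf. In constant curvature $-1$, an unstable Jacobi field along a geodesic is damped by exactly $e^{-t}$ in each of the $n-1$ directions transverse to $X$ inside the unstable horosphere; hence $\det D^u X_{-T}\equiv e^{-(n-1)T}$ on every unstable plaque, independently of the basepoint. Therefore
$$\psi^u(x,y)=\lim_{T\to+\infty}\frac{\det D^u X_{-T}(y)}{\det D^u X_{-T}(x)}\equiv 1$$
whenever $x,y$ lie in the same unstable leaf. Inserting this into the definition of $\cM_D$ in Section~\ref{ss.udef} shows that $\cM_D$ reduces to the normalized unstable Lebesgue measure on $D$, i.e.\ to normalized horospherical volume. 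By the very definition of a u-Gibbs state, the disintegration of any u-Gibbs state $\mu$ along an unstable foliated chart then coincides, up to normalization, with horospherical volume on almost every plaque. This is exactly the requirement that $\mu$ be a horocycle measure in the sense fixed just above Theorem~\ref{teoMatilde}.

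For step (ii) one simply notes that $\mu$ is $X$-invariant by the first clause of the definition of a u-Gibbs state, and a horocycle measure by step (i); Theorem~\ref{teoMatilde} then applies and immediately yields that $\pi_*\mu$ is a harmonic measure on $(M,\cF)$, which is precisely the statement of the corollary. The only substantive point is the identity $\psi^u\equiv 1$, a classical Jacobi-field computation in constant curvature, so no real obstacle arises; everything else is a direct reduction to Theorem~\ref{teoMatilde}.
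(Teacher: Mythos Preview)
Your proposal is correct and follows exactly the route the paper takes: the paper remarks just before Theorem~\ref{teoMatilde} that in constant curvature $-1$ the unstable Jacobian is the constant $n-1$, hence $\psi^u\equiv 1$ and u-Gibbs states coincide with horocycle measures, after which the corollary is an immediate application of Theorem~\ref{teoMatilde}. Your write-up simply makes explicit the Jacobi-field computation that the paper leaves implicit.
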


\section{Normal Lyapunov exponents and  SRB-measures}

Let $\cF$ be a smooth foliation on a compact manifold $M$ and $X$  a uniformly $C^r$ vector field tangent the leaves, $r\geq2$.
Let $N_\cF$ be the normal bundle of $\cF$, that is, the quotient of the tangent bundle of $M$ by the tangent bundle to the foliation.  To each path $\gamma$ in a leaf, one may associate the linear holonomy from $N_\cF(\gamma(0))\to N_\cF(\gamma(1))$.
In this way, one defines a flow $\cX$ on $N_\cF$ as follows:
\begin{itemize}
\item $\cX$ is a linear cocycle over $X$, that is, $\cX$ leaves invariant the bundle structure over $M$,  the action on the leaves is linear and  $\cX$ projects on $X$ ;
\item $\cX_t|N_\cF(x)\colon N_\cF(x)\to N_\cF(X_t(x))$ is the linear holonomy of $\cF$ over the path $X_{[0,t]}(x)$
\end{itemize}

For time 1 we obtain a linear cocycle over a continuous dynamical system on a compact manifold, and we may apply
Oseledets' theorem (\cite{Katok}).  Hence for every $X$-invariant ergodic probability there is an associated
measurable $\cX$-invariant decomposition of the normal bundle in Lyapunov spaces on which the rate of
exponential growth of the vectors is well defined and constant and called the
\emph{normal Lyapunov exponents} of the ergodic measure.

\vskip3mm

We will now give a proof of Theorem \ref{main_theorem}. The first statement follows form classical results due to Pesin and Pugh and Shub, together with Oseledets' theorem.

\begin{demt2}
\begin{enumerate}
\item The flow $X_t$ is a smooth flow on $ M$ whose Lyapunov exponents for the measure $\mu$ are $\lambda^c=0$ (corresponding to the direction of the flow), $\lambda_j^s<0$, $\lambda_j^u>0$ (corresponding to the directions of the stable and unstable tangential directions to $\cF$, respectively) and the normal exponents $\lambda_j$ which are negative by hypothesis.  Oseledets' theorem   gives us a set $R$ (the {\em regular} set) that has full measure with respect to $\mu$, and over which there is an invariant measurable splitting
\begin{equation}
\label{Oseledets_splitting}
T_R M= E^{s}\oplus E^c \oplus E^u \oplus E^\pitchfork
\end{equation}
such that for every $x\in R$
\begin{equation}
\lim_{t\to \infty}\frac{1}{t} \log ||D_xX_t(v)||=-,0,+
\end{equation}
if $v\in E^j$, for $j=s,c,u$.

Pesin Stable Manifold Theorem (see \cite{Pesin} or \cite{Fathi-Herman}) tells us that for $\mu$-almost every $x\in R$ the local stable manifold at $x$
\begin{equation*}
W^s(x)=\{y\in  M:\ \limsup _{t\to\infty}\frac{1}{t}\log d(X_t(x),X_t(y))\}
\end{equation*}
is an embedded smooth   disk. Furthermore, Pesin proved that this foliation is absolutely continuous, see \cite{Pesin}.

Under this circumstances, there is a well known theorem due to Pugh and Shub (see \cite[Theorem 3]{Pugh-Shub}) that guarantees that the measure $\mu$ is an SRB measure for the flow $X_t$.

\item The arguments on the proof of this theorem are contained in \cite{BV}, although they are stated
for and used in a different context -- that of partial hyperbolicity with an additional assumption called
``mostly contracting center direction". The strong similarity of the ``mostly contracting" condition in the partially
hyperbolic setting and negativity of normal Lyapunov exponents in the setting of foliated hyperbolicity,
allows us to follow \cite{BV} closely.

More precisely, the difficulty in proving the finiteness of ergodic attractors stems from the fact that
the Pesin stable manifolds (tangent to the stable direction and the transverse direction) vary in a measurable way.
In particular, we have no {\it a priori} lower bound on their size. This problem is handled in \cite[Proposition 2.1]{BV},
where the absolute continuity of the Pesin stable ``foliation" is also established. This in turn implies, as a consequence of
\cite[Corollary 2.2]{BV} and Hopf's argument \cite{Brin}, that accessibility classes are open, which is \cite[Lemma 2.7]{BV}.
The finiteness of $u$-Gibbs states follows, as in \cite[Lemma 2.9]{BV}.
\end{enumerate}

\end{demt2}

\section{SRB measures for the foliated geodesic flow: visibility functions of the attractors}

In this section we will consider a smooth foliation $\cF$ of the compact manifold $M$ and
a $C^3$ Riemannian metric on $M$ which induces negative curvature on all the leaves of $\cF$.
We denote by $\tilde\cF$ the foliation on the unit tangent bundle of the foliation whose leaves are the unit tangent bundles of the leaves.
The geodesic flow $X_t$ leaves the foliation $\tilde\cF$ invariant.

We assume here that all the transverse Lyapunov exponents
of the foliated geodesic flow are
negative for all ergodic u-Gibbs states.  In other words, one assume   the hypotheses of part 2 in
Theorem \ref{main_theorem}.
As a corollary of Theorems~\ref{uGibb2} and 2  one gets:

\begin{coro}\label{proportion} Given any point $x\in M$, for Lebesgue almost all unit vector $v\in T^1_x\cF$,
the corresponding orbit of the foliated geodesic flow belongs to the basin $\cB(\mu_i)$  of one of the SRB measures $\mu_i$.
\end{coro}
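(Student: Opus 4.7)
The plan is to apply Theorem \ref{main_theorem} part 2 to the foliated geodesic flow $X$ on $T^1\cF$: its hypotheses are fulfilled since $X$ is $C^2$ on $T^1\cF$ (the Riemannian metric is $C^3$), it is $\tilde\cF$-hyperbolic by the leafwise negative curvature, and every ergodic $u$-Gibbs state has only negative normal Lyapunov exponents by the standing assumption of this section. One obtains finitely many ergodic $u$-Gibbs states $\mu_1,\ldots,\mu_k$, each an SRB measure, with $B:=\bigcup_{i=1}^{k}\cB(\mu_i)$ of full Lebesgue measure in $T^1\cF$. Theorem \ref{uGibb2}, applied via the remark following Proposition~\ref{uGibb1} to the fiber $T^1_x\cF$ viewed as a smooth disk transverse to the leafwise center-stable foliation $\cW^{cs}_{\tilde\cF,X}$ inside the Anosov leaf $T^1\cL_x$, further gives that for spherical Lebesgue a.e.\ $v\in T^1_x\cF$ every accumulation of the empirical measures $\mu_T(v)$ is a $u$-Gibbs state. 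The content of the corollary is to upgrade this to genuine convergence $\mu_T(v)\to\mu_i$ for some $i$, for spherical Lebesgue a.e.\ $v\in T^1_x\cF$ and for every $x\in M$ (not merely for a.e.\ $x$).

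The key tool for the upgrade is the Pesin strong stable foliation $\cW^{ss}$ of the flow $X$ on $T^1\cF$, whose leaves have dimension $(n-1)+q$ (where $q$ is the codimension of $\cF$ in $M$) and are the direct sum of the leafwise stable direction $E^s$ and the normal direction to $\tilde\cF$, both uniformly contracted by the assumption on Lyapunov exponents. By Pesin theory, $\cW^{ss}$ is absolutely continuous and the set $B$ is exactly saturated by its leaves: if $v\in\cB(\mu_i)$ and $w\in \cW^{ss}(v)$ then $d(X_t(v),X_t(w))\to 0$, so the forward empirical measures of $v$ and $w$ share the limit $\mu_i$, whence $w\in\cB(\mu_i)$. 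Now fix $x\in M$ and form the $n$-dimensional flow box $C_x:=X_{[0,\epsilon]}(T^1_x\cF)$, which is a smooth disk contained in the single leaf $T^1\cL_x$ of $\tilde\cF$. Since the spherical fiber $T^1_x\cL_x$ is classically transverse in the Anosov leaf $T^1\cL_x$ to the leafwise center-stable foliation, $C_x$ is transverse to the leafwise strong stable foliation; since moreover $C_x$ has zero component in the normal direction of $\tilde\cF$, a dimension count gives that $C_x$ is transverse in the ambient $T^1\cF$ to $\cW^{ss}$.

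Combining the absolute continuity of $\cW^{ss}$ with the saturation of $B$ yields that $B\cap C_x$ has full Lebesgue measure on the smooth disk $C_x$. Writing $C_x\cong T^1_x\cF\times[0,\epsilon]$ and using the $X_t$-invariance of $B$, Fubini's theorem then delivers that $B$ meets $T^1_x\cF$ in a set of full spherical Lebesgue measure, as claimed. The main obstacle is precisely the first step in this paragraph: the transfer of the full ambient Lebesgue measure of $B$ in $T^1\cF$ to full Lebesgue measure of $B\cap C_x$ on the fixed smooth disk $C_x$. The Pesin strong stable foliation $\cW^{ss}$ is a priori only defined on Pesin regular blocks of full measure for each $\mu_i$, and these blocks have no obvious interaction with $C_x$; this is handled by the same Hopf-type argument used in the proof of Theorem~\ref{main_theorem} part 2, following \cite{BV}, which covers $T^1\cF$ by foliated product charts adapted to Pesin blocks and uses absolute continuity of $\cW^{ss}$ to push the Lebesgue null set $T^1\cF\setminus B$ to a null set on $C_x$.
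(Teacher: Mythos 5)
Your first half matches the paper: apply Theorem \ref{main_theorem} (part 2) to the foliated geodesic flow to obtain finitely many ergodic $u$-Gibbs states $\mu_1,\dots,\mu_k$, each an SRB measure, with $B=\bigcup_i\cB(\mu_i)$ of full Lebesgue measure in $T^1\cF$. The divergence --- and the gap --- is in the transfer of this ambient statement to the fixed fiber $T^1_x\cF$. You propose to intersect the thickened fiber $C_x=X_{[0,\epsilon]}(T^1_x\cF)$ with the \emph{ambient} Pesin strong stable lamination $\cW^{ss}$ (tangent to $E^s\oplus E^\pitchfork$) and invoke its absolute continuity. But $\cW^{ss}$ is only a measurable lamination, defined on Pesin blocks of full $\mu_i$-measure, with leaves of non-uniform size; absolute continuity of its holonomies on those blocks controls the trace of a saturated set on \emph{almost every} transversal of a product chart, but gives no control over the trace of the Lebesgue-null set $T^1\cF\setminus B$ on one \emph{prescribed} positive-codimension disk $C_x$. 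Nothing you cite rules out that $C_x$ misses every Pesin block, i.e.\ that $C_x\subset T^1\cF\setminus B$; the Hopf-type argument of \cite{BV} that you appeal to yields the full ambient measure of $B$ and the finiteness of the ergodic $u$-Gibbs states, not a statement about arbitrary transversals. In fact, ``every disk transverse to $\cW^{ss}$ meets $B$ in full measure'' is essentially the content of the corollary itself, so the step you flag as ``the main obstacle'' is the whole point, and it is left unproved.

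The paper closes exactly this gap with a uniform, non-Pesin object: the center-stable foliation of the foliated geodesic flow \emph{inside the single leaf} $T^1\cL_x$, which exists with uniform size by foliated hyperbolicity and is absolutely continuous in the leaves of $\tilde\cF$ (Section 3.2). Concretely, one first finds by Fubini \emph{some} point $y$ in the plaque $\Delta\ni x$ with $T^1_y\Delta\cap B$ of full spherical measure, and then transports this to $T^1_x\Delta$ by the everywhere-defined, absolutely continuous center-stable holonomy between the two unit spheres, using that each basin is saturated by center-stable leaves. That within-leaf holonomy is what upgrades ``almost every $x$'' to ``every $x$''; your argument never uses it for the transfer, and so does not reach the stated conclusion. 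To repair your proof, replace the appeal to $\cW^{ss}$ by this leafwise center-stable holonomy (or supply an actual argument that the Pesin stable manifolds of points of $\bigcup_i\operatorname{supp}\mu_i$ cover $C_x$ up to a set that is null \emph{in} $C_x$, which is a genuinely stronger statement than full ambient measure).
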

\begin{demo}Consider a plaque $\Delta$ of $\cF$ containing the point $x$ and consider the unit tangent bundle $T^1\Delta$.
According to Theorem \ref{main_theorem}, for almost every vector $v$ in $T_1\Delta$ there is $i$ so that
the orbit of $X_t$ through $v$ belongs to the basin of $\mu_i$.  Therefore, by Fubini theorem, there is a point $y\in\De$ so that the set
$T^1_y\Delta\cap \bigcup_i\cB(\mu_i)$ has total Lebesgue measure.
Now each $\cB_i$ is invariant under the $\cF$-centerstable foliation. Thus the image of $T^1_y\Delta\cap \bigcup_i\cB(\mu_i)$ by the
centerstable holonomy from $T^1_y\Delta$ to $T^1_x\Delta$ is contained in $T^1_x\Delta\cap \bigcup_i\cB(\mu_i)$.  As the center stable
foliation is absolutely continuous in the leaves of $\cF$, one gets that $T^1_x\Delta\cap \bigcup_i\cB(\mu_i)$ has total Lebesgue measure,
as announced.
\end{demo}

This property makes it natural to define
\begin{defi}
We call  \emph{visibility functions} of the measure $\mu_i$ and we denote by $f_i\colon M\to [0,1]$ the function defined by
$$f_i(x)=Leb_x( T^1_x\cF)\cap\cB(\mu_i)$$
where $Leb_x$ is the spherical measure on $T^1_x\cF$.
\end{defi}

Then corollary~\ref{proportion} implies that
$\sum_i f_i(x)=1$ for every $x\in M$.

\begin{rema} $\cB(\mu_i)$ is invariant for the center-stable foliation inside the leaves of $\tilde\cF$.
\end{rema}
According to the above remark, the functions $f_i$ vary smoothly  along the leaves, and the set of point for which
$f_i(x)>0$ is saturated for the foliation $\cF$. More precisely, each basin corresponds to some subset of the boundary
at infinity of the universal cover of the leaf, and the $f_i$ is the mass of the vectors corresponding to this set at infinity.

We are now in a position to prove Theorem \ref{visibility_functions}.

\begin{demt3} Every orbit in the basin of an SRB measure has well defined transverse  Lyapunov exponents for positive
iterations, and these Lyapunov exponents are negative. Hence the orbits have a transverse Pesin stable manifold and
this family of stable manifolds is absolutely continuous with respect to Lebesgue measure. One deduces that the functions
$f_i$ are lower semicontiuous.  As the sum of the $f_i$ is $1$, the semi continuity of the $f_i$ implies the continuity.
\end{demt3}

\section{Transversely Conformal   Foliations with a $C^2$ $\cF$-hyperbolic vector field }

\subsection{ Negativity of the Lyapunov Exponent}

Throughout this section, let $(M,\cF)$ be a compact manifold with a smooth transversely conformal foliation
and $X$
a $C^2$ $\cF$-hyperbolic vector field. In this case, the
normal Lyapunov exponents are all equal.

We will make the additional assumption that the foliation is endowed with a Riemannian metric along the
leaves that has a H\"older
variation in the transverse direction.

Under the above hypothesis, Theorem \ref{main_theorem} says that if $\mu$ is an
ergodic u-Gibbs state for $X$ and its normal Lyapunov exponent is negative, then $\mu$ is an SRB-measure.

A special case of this is the following:

\begin{coro}
Let $(M,\cF)$ be a compact manifold with a smooth transversely conformal foliation, and suppose that $M$
has a $C^2$-Riemannian
metric that induces negative curvature on the leaves. Denote by
$X$ the generator of the foliated geodesic flow on $T^1\cF$ and let $\mu $ be an ergodic $u$-Gibbs state
for $X$. If
the Lyapunov exponent of the normal bundle of $\cF$ along $X$ is negative, then $\mu$ is an SRB-measure for
the foliated geodesic flow.
\end{coro}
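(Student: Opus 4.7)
The plan is to deduce this directly from Theorem~\ref{main_theorem}(1) applied to the $\tilde\cF$-hyperbolic vector field $X$ on $T^1\cF$. The only nontrivial point is to show that transverse conformality of $\cF$ forces every normal Lyapunov exponent of $\mu$ to coincide, so that the single assumed negative exponent actually encodes the hypothesis ``all normal Lyapunov exponents are negative'' required by that theorem.

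First I would verify that the foliation $\tilde\cF$ on the unit tangent bundle $T^1\cF$ is itself transversely conformal. Indeed, the bundle projection $\pi\colon T^1\cF\to M$ sends leaves of $\tilde\cF$ onto leaves of $\cF$ and its fibers are contained in the leaves of $\tilde\cF$; consequently, every local transversal to $\cF$ lifts canonically to a local transversal to $\tilde\cF$, and the two holonomy pseudogroups are identified. In particular, the linear holonomy of $\tilde\cF$ along any path acts on the normal bundle $N_{\tilde\cF}$ by a conformal linear map.

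Second, I would examine the linear cocycle $\cX_t$ on $N_{\tilde\cF}$ defined in Section~5. By the previous step, each fiber map $(\cX_t)|_{N(v)}\colon N(v)\to N(X_t(v))$ has the form $c(t,v)\cdot O(t,v)$ with $c(t,v)>0$ and $O(t,v)$ an isometry. Feeding this cocycle into Oseledets' theorem applied to the ergodic measure $\mu$, one immediately obtains that every normal Lyapunov exponent of $\mu$ equals $\lim_{t\to\infty}\frac{1}{t}\log c(t,v)$ at $\mu$-a.e.\ $v$; hence all normal Lyapunov exponents coincide.

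Finally, the assumption that the (single) normal Lyapunov exponent is negative then says that all normal Lyapunov exponents of $\mu$ are negative, and Theorem~\ref{main_theorem}(1) directly yields that $\mu$ is an SRB-measure for the foliated geodesic flow. The main substantive step is the verification that the transverse structure lifts to $T^1\cF$ as a transversely conformal structure, but this follows from the basic geometry of the unit tangent bundle of a foliation, so I do not expect a serious obstacle.
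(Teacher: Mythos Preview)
Your proposal is correct and follows exactly the route the paper takes: the paper states at the outset of Section~7 that for a transversely conformal foliation the normal Lyapunov exponents are all equal, and then presents this corollary explicitly as ``a special case'' of the statement that Theorem~\ref{main_theorem}(1) applies. Your write-up is in fact more detailed than the paper's, since you spell out why the transversely conformal structure of $\cF$ lifts to $\tilde\cF$ on $T^1\cF$ (via the identification of transversals and holonomy under $\pi$) and why a conformal linear cocycle has a single Lyapunov exponent; the paper simply asserts these facts.
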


In the special case where the leaves have not only negative curvature but constant curvature -1, this gives
rise to Theorem 4, the proof of which is the object of the present section.

Let us fix a transverse metric compatible with the conformal structure on trans\-ver\-sals; $|\cdot|$ is
its
associated norm.
 If $\gamma:[0,+\infty)\to M$ is a path lying on a single leaf,
let $h_{\gamma_{|[0,t]}}$ be the holonomy transformation corresponding to the path $\gamma$ between times
0 and $t$.
The {\em Lyapunov exponent} corresponding to $\gamma$ is the limit
\begin{equation}
\label{Lyapunov_exponent}
\lambda (\gamma) = \lim_{t\to +\infty} \frac{1}{t} \log |Dh_{\gamma_{|[0,t]}}|,
\end{equation}
if it exists. (When writing $Dh_{\gamma_{|[0,t]}}$ we omit the point $\gamma(0)$ where we are taking the
differential.)

The Birkhoff Ergodic Theorem and the Random Ergodic Theorem tell us that
for $\nu$-almost every $x\in M_0$ and almost every Brownian path $\gamma$ starting at x,
the above limit exists and is independent of both $x$ and  $\gamma$. It is called the Lyapunov exponent of
the
measure $\nu$, $\lambda(\nu)$.

The following result can be found in \cite{Deroin-Kleptsyn}:

\begin{theor} (Deroin-Kleptsyn)
Let $(M,\cF)$ be a compact minimal subset of a transversely conformal foliation having a Riemannian
metric
whose variation in the transverse direction is H\"older-continuous. Let $\nu$ be an $(M,\cF)$ harmonic measure.
If $(M,\cF)$ admits no holonomy invariant measures, then $\lambda(\nu)<0$.
\end{theor}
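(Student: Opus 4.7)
I would proceed by contradiction: assume $\lambda(\nu) \geq 0$ and construct a holonomy-invariant transverse measure, which would contradict the hypothesis. Transverse conformality is essential here, since it reduces the linear normal holonomy cocycle to multiplication by a positive scalar; hence $\phi_\gamma := \log |Dh_\gamma|$ is a well-defined real-valued additive cocycle along leaf paths, H\"older continuous in the transverse direction by the regularity assumption on the metric. I would then set up the leafwise Brownian motion, noting that $\nu$ is stationary for the associated diffusion semigroup (this is precisely the harmonicity of $\nu$), and invoke Kingman's subadditive ergodic theorem for the shift on the space of Brownian paths to deduce that $\lim_{t\to\infty} t^{-1}\phi_{\gamma[0,t]}$ exists and equals the constant $\lambda(\nu)$ for $\nu\otimes\mathbb{P}$-almost every pair $(x,\gamma)$.

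\textbf{Constructing the invariant transverse measure.} Under the assumption $\lambda(\nu) \geq 0$, the plan is to produce a Radon measure $\theta$ on a complete transversal $T$ that is invariant under every element of the holonomy pseudogroup. The procedure: fix a reference conformal volume $\omega$ on $T$ coming from the transverse conformal structure (on which the holonomies act by multiplication by $|Dh|$), push $\omega$ forward along long Brownian arcs, and pass to a Ces\`aro limit in time. Non-negativity of $\lambda$ is exactly what ensures that the pushforwards remain tight on compact pieces of $T$: contracting dynamics cannot collapse all the mass to points, because the cocycle $\phi$ does not diverge to $-\infty$ along typical paths. The limit measure $\theta$ has density $\rho$ relative to $\omega$ satisfying the cocycle relation $\rho \circ h = |Dh|^{-1}\cdot \rho$, which is precisely the statement that $\theta = \rho\,\omega$ is holonomy invariant.

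\textbf{Main obstacle.} The critical step is the tightness and existence of the limit in the borderline case $\lambda(\nu) = 0$. When $\lambda(\nu) > 0$, one would time-reverse the Brownian motion (which remains a diffusion with stationary measure $\nu$ by reversibility arguments for symmetric diffusions), obtain an exponentially contracting normal dynamics, and extract the invariant measure by a direct contraction argument. The case $\lambda(\nu) = 0$ is subtler and is the technical heart of Deroin-Kleptsyn's theorem. One must exploit both the strong mixing properties of leafwise Brownian motion on hyperbolic leaves (uniform Harnack inequalities for the heat kernel) to propagate the transverse H\"older regularity of $\phi$, and the one-dimensional character of $\phi$ coming from conformality, which reduces the problem to a scalar random walk with zero drift. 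The invariant measure is then produced by a fixed-point / Krylov-Bogolyubov argument in the spirit of Furstenberg's construction of stationary measures for products of random matrices, with transverse H\"older regularity playing the role of a uniform irreducibility condition.
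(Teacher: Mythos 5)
First, a caveat on the comparison itself: the paper does \emph{not} prove this statement --- it is imported verbatim from \cite{Deroin-Kleptsyn} (``The following result can be found in...'') and used as a black box in the proof of Theorem 4. There is therefore no internal proof to measure your attempt against; I can only compare your sketch with the actual argument of Deroin and Kleptsyn.

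Your architecture --- argue by contradiction, use transverse conformality to reduce the normal holonomy to a scalar additive cocycle $\phi_\gamma=\log|Dh_\gamma|$, apply the ergodic theorem over leafwise Brownian motion with the harmonic (i.e.\ diffusion-stationary) measure $\nu$, and show that $\lambda(\nu)\ge 0$ forces a holonomy-invariant transverse measure --- is the correct skeleton, and it matches the known proof in outline. But the two steps that actually carry the theorem are missing or broken. (i) The case $\lambda(\nu)=0$, which you rightly call the technical heart, is where all the content lives, and you do not supply it: a Krylov--Bogolyubov/Ces\`aro limit of pushforwards of a reference conformal volume on a transversal is not automatically invariant under the \emph{whole} holonomy pseudogroup, and the cocycle identity $\rho\circ h=|Dh|^{-q}\,\rho$ (with $q$ the codimension) for the limiting density does not follow from weak-$*$ convergence alone; one needs uniform distortion control of the holonomies along typical Brownian arcs, which is precisely what the zero exponent must be shown to yield via the transverse H\"older regularity and heat-kernel (Harnack) estimates in \cite{Deroin-Kleptsyn}. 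Your tightness remark does not touch this: on a compact transversal tightness is free, and ``mass collapsing to points'' does not obstruct it --- the real danger is that the limit is atomic and non-invariant. (ii) The treatment of $\lambda(\nu)>0$ by time reversal is suspect: the leafwise Laplacian is symmetric with respect to leafwise \emph{volume}, not with respect to $\nu$, so the $\nu$-reversed process is a diffusion with an additional drift $2\nabla\log h$ (with $h$ the local harmonic density), not Brownian motion; and even granting that the reversed cocycle has exponent $-\lambda(\nu)<0$, a contraction argument produces a (typically atomic) stationary measure for the reversed process, not a holonomy-invariant transverse measure, so no contradiction is reached this way. In the actual proof the inequality $\lambda(\nu)\le 0$ is established a priori for every harmonic measure, so the strictly positive case never has to be handled separately. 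In short: right strategy, but the borderline case is asserted rather than proved, and the auxiliary case rests on a false reversibility claim.
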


The requirement that the transverse variation of the metric be H\"older continuous is necessary,
see \cite{Deroin-Vernicos}.

Let $\mu^+$ be the ergodic horocycle measure on $T^1\cF$ which is invariant under the
geodesic flow and projects onto $\nu$.
It is an ergodic measure for the geodesic flow, and we can define
the Lyapunov exponent $\lambda(\mu^+)$ as the Lyapunov exponent of
the holonomy along a generic geodesic; namely
\begin{equation}
\label{Lyapunov_exponent_along_geodesics}
\lambda(\mu^+) = \lim_{t\to +\infty} \frac{1}{t} \log |Dh_{\gamma_{|[0,t]}}|,
\end{equation}
where $\gamma(s)=g_s(v)$ and $v\in T^1\cF$ is generic for $\mu^+$.

\begin{lemm}
With the above notations,
\begin{equation}
\label{equality_of_Lyapunov_exponents}
\lambda(\nu)=\lambda(\mu^+).
\end{equation}
\end{lemm}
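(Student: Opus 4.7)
\textit{Proof proposal.} The plan is to realize both exponents as averages of a single asymptotic cocycle growth rate indexed by points of the boundary at infinity of a leaf's universal cover, one via Brownian paths and the other via geodesic rays, and to match them using the classical fellow-traveler property between Brownian paths and geodesics in hyperbolic geometry.

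First I would disintegrate $\mu^+$ along the fibers of $\pi\colon T^1\cF \to M$. For each $x\in M$, the fiber $T^1_x\cF$ lifts to $T^1_x\tilde\cL_x$ and, via the visual map (a diffeomorphism in constant curvature $-1$), is identified with the ideal boundary $\partial_\infty \tilde\cL_x$. The conditional measure of $\mu^+$ on this fiber thus becomes a probability measure $\nu_x$ on $\partial_\infty \tilde\cL_x$, and the characterization of $\mu^+$ as the geodesic-flow-invariant horocycle measure projecting to the harmonic measure $\nu$ (which is the content of the bijection in Theorem~\ref{teoMatilde}) forces $\nu_x$ to be the Poisson measure of leafwise Brownian motion started at $x$.

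Next I would invoke the classical sublinear tracking of Brownian paths by geodesics (Prat, Kifer, Ancona): almost every leafwise Brownian path $(B_s)_{s\ge 0}$ starting at $x$ converges to a random $\xi \in \partial_\infty\tilde\cL_x$ distributed as $\nu_x$, and $d_\cF(B_s,\gamma_{x,\xi}(s)) = O(\log s)$, where $\gamma_{x,\xi}$ is the geodesic ray from $x$ to $\xi$ parameterized by arc length. By compactness of $M$, together with the $C^1$ transversely conformal structure and the H\"older-continuous transverse variation of the metric, the normal holonomy cocycle is uniformly controlled on leafwise paths of bounded length. Closing up the Brownian segment $B|_{[0,s]}$ to the geodesic segment $\gamma_{x,\xi}|_{[0,s]}$ by a leafwise path of length $O(\log s)$ therefore alters $\log|Dh|$ by at most $O(\log s)$, so after dividing by $s$ the Brownian and geodesic cocycle growth rates agree $\nu_x$-almost surely.

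Finally I would integrate: the Random Ergodic Theorem expresses $\lambda(\nu)$ as the double integral of the Brownian cocycle limit over the Wiener measure at $x$ and then over $d\nu(x)$, while by the Birkhoff Ergodic Theorem and the disintegration from the first step, $\lambda(\mu^+)$ is the corresponding double integral of the geodesic cocycle limit over $d\nu_x(\xi)$ and then $d\nu(x)$. The previous step identifies the two integrands $\nu_x$-almost everywhere, so the integrals coincide. The main obstacle is the sublinear tracking estimate itself, where the constant curvature $-1$ hypothesis plays a decisive role and where the standard normalizations of the Brownian generator versus arc length are what produce an exact equality rather than mere proportionality of the two exponents.
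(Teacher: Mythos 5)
Your strategy (shadowing leafwise Brownian paths by geodesic rays and matching the two cocycle averages at infinity) is entirely different from the paper's, which never touches path space: the paper writes $\lambda(\nu)=\int_M\div Y\,d\nu$ and $\lambda(\mu^+)=\int_{T^1\cF}\omega(X)\,d\mu^+$ for $Y=\nabla\log|u|$, converts the first via Candel's integration-by-parts identity against the modular form $\eta=d\log h$, and matches the two by an explicit Poisson-kernel computation on central-unstable plaques together with the identity $X=-\nabla\log k(\cdot,\xi)$. Your route could in principle work, but as written it has a genuine gap at the disintegration step. The conditional measure of $\mu^+$ on the fiber $T^1_x\cF\cong\partial_\infty\tilde\cL_x$ is \emph{not} the hitting (Poisson) measure $\omega_x=k(x,\xi)\,d\xi$ of Brownian motion started at $x$: by the Bakhtin--Mart\'{\i}nez/Connell--Mart\'{\i}nez correspondence, $\mu^+$ is locally $k(x,\xi)\,dx\,dm_h(\xi)$, so the fiber conditional is $k(x,\xi)\,dm_h(\xi)/h(x)$, where $m_h$ is the boundary measure representing the local harmonic density $h$ of $\nu$ --- i.e., the exit measure of the Doob $h$-process, not of Brownian motion. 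Since $m_h$ may well be singular with respect to the Lebesgue class on the sphere at infinity, your final step integrates the limit $\lambda(\gamma_{x,\xi})$ against two families of fiber measures that can be mutually singular, and ``for $\omega_x$-a.e.\ $\xi$'' does not imply ``for $\mu^+$-a.e.\ $v$''. The argument can be repaired, but it needs an extra idea you have not supplied: observe that $v\mapsto\lambda(v)$ is constant on center-stable leaves (forward-asymptotic geodesics have the same exponent), so the Brownian step gives $\lambda(\gamma_{x,\xi})=\mathrm{const}$ for Lebesgue-a.e.\ forward endpoint $\xi$, and then use the $u$-Gibbs property of $\mu^+$ (Lebesgue conditionals along unstable horospheres, which project diffeomorphically onto the forward boundary) in a Hopf-type argument to upgrade this to a full $\mu^+$-measure statement.

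A second, quantitative problem is the tracking estimate itself. Sublinear shadowing relates $B_s$ to $\gamma_{x,\xi}(\ell s)$, not to $\gamma_{x,\xi}(s)$, where $\ell$ is the linear drift of the leafwise Brownian motion; for the $\Delta$-generated motion on hyperbolic leaves of dimension $n$ one has $\ell=n-1$. Carried out correctly, your argument therefore yields $\lambda(\nu)=\ell\,\lambda(\mu^+)$, and the stated equality requires $\ell=1$, i.e., surface leaves with the $\Delta$ normalization (which is also exactly the case in which the paper's identity $X=-\nabla\log k(\cdot,\xi)$ holds as an identity of unit vector fields). This only affects the exponent by a positive constant, so it is harmless for the sign statement needed in Theorem 4, but it does contradict the equality you set out to prove unless you fix the time normalization. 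You should also make explicit the (true, but not automatic) claim that closing up the two paths by an arc of length $O(\log s)$ changes $\log|Dh|$ by $O(\log s)$; this uses that the two lifted paths in the universal cover of the leaf end at nearby points, so the connecting arc realizes the correct homotopy class, plus a uniform bound on $|\omega|$ coming from compactness of $M$.
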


The remainder of this subsection will be devoted to the proof of this result.

We will need expressions for the Lyapunov exponents $\lambda(\nu)$ and $\lambda(\mu^+)$ which are easier
to deal with.
We follow
\cite[Sec.3.1]{Deroin-Kleptsyn}.

Consider a holonomy-invariant local normal vector field $u$ in $T^1\cF$. Such a vector field can be defined
in every foliated chart,
and its   norm $|u|$ is globally defined up to a multiplicative constant on plaques. Therefore,
$\omega=d\log |u|$ is a globally defined foliated 1-form on $M$. (Here $d$ is the exterior derivative
in the leaf direction.) Together with the Riemannian metric on the leaves,
it determines $\tilde Y=\nabla \log|u|$, which is a vector field along the leaves of $T^1\cF$.

Applying the same argument to the foliation $\cF$, we obtain the vector field $Y=\nabla \log|u|$
on $M$ which is tangent to the leaves of $\cF$.

As in \cite{Deroin-Kleptsyn}, we can write the Lyapunov exponent of the ergodic harmonic measure $\nu$ as
\begin{equation}
\lambda(\nu)=\int_M \Delta \log |u|\, d\nu = \int_M \div Y\, d\nu,
\end{equation}
where $\div$ is the leafwise divergence.

Now consider a path $\gamma:[0,T]\to T^1\cF$ lying on a leaf, and let $h_\gamma$ be
the holonomy tranformation it determines.
The contraction or expansion of the holonomy along $\gamma$ is
\begin{equation*}
|Dh_\gamma|= e^{\int_\gamma \omega}.
\end{equation*}
Therefore, the Lyapunov exponent along a geodesic orbit is
\begin{equation}
\lambda (v)=\lim_{t\to +\infty} \frac{1}{t} \int_0^t \omega(g_s(v))\, ds.
\end{equation}
If $X$ is the vector field that directs the foliated geodesic flow on $T^1\cF$,
the Lyapunov exponent of the measure $\mu^+$ is
\begin{equation}
\lambda(\mu^+)=\int _{T^1\cF} \lambda (v)\, d\mu^+(v) = \int _{T^1\cF} \omega(X)\, d\mu^+.
\end{equation}

Desintegration of the harmonic measure $\nu$ on a flow box of $(M,\cF)$
yields a measure on its transversal and conditional measures
of the form $h\cdot vol$ on its plaques, where $h$ is a positive function which is harmonic in the leaf
direction and $vol$
is the volume on leaves. On two overlapping flow boxes, the corresponding harmonic densities $h$ and $h'$
coincide up to
multiplication by a positive function which is constant on plaques. Therefore, $\eta=d\log h$ is a
well-defined foliated
1-form, which is called the {\em modular form} of $\cF$.

The following result, which we will use to relate the exponents $\lambda(\nu)$ and $\lambda(\mu^+)$,
appears in
\cite[p.210]{Candel2}.

\begin{lemm}
\label{Lemma_Candel}
The modular form satisfies
\begin{equation*}
\int _M \div Y\, d\nu = -\int _M \eta(Y)\, d\nu
\end{equation*}
for every vector field $Y$ along the leaves with an integrable divergence.
\end{lemm}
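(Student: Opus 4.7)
The plan is to reduce to a local computation in foliated flow boxes and then patch via a partition of unity. Fix a finite cover of $M$ by flow boxes $\{U_\alpha\}$ in which $\nu$ disintegrates as
$$ d\nu|_{U_\alpha} = h_\alpha(x,t)\, dvol_P(x)\, d\theta_\alpha(t), $$
with $h_\alpha$ a positive leafwise-harmonic density and $\theta_\alpha$ a transversal measure. Since $\eta|_{U_\alpha}= d\log h_\alpha$ means $\langle \nabla \log h_\alpha, Y\rangle = \eta(Y)$ in the leaf direction, the leafwise Leibniz rule gives the pointwise identity
$$ h_\alpha \cdot \div Y \;=\; \div(h_\alpha Y) \;-\; h_\alpha \cdot \eta(Y). $$

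Next, take a subordinate partition of unity $\{\rho_\alpha\}$ with each $\rho_\alpha$ compactly supported in $U_\alpha$. Using $\sum \rho_\alpha \equiv 1$, so that $\sum \nabla \rho_\alpha = 0$ along leaves, one has
$$ \div Y = \sum_\alpha \rho_\alpha \div Y = \sum_\alpha \bigl(\div(\rho_\alpha Y) - \langle \nabla \rho_\alpha, Y\rangle\bigr) = \sum_\alpha \div(\rho_\alpha Y), $$
so it suffices to establish the identity for each $\rho_\alpha Y$ separately. Because $\rho_\alpha Y$ is compactly supported in $U_\alpha$, on every plaque $P$ of $U_\alpha$ the field $h_\alpha \rho_\alpha Y$ is compactly supported in the interior of $P$, and the leafwise divergence theorem yields $\int_P \div(h_\alpha \rho_\alpha Y)\, dvol = 0$. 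Applying the pointwise identity to $\rho_\alpha Y$ and using Fubini (justified by $\div Y \in L^1(\nu)$) gives
$$ \int_{U_\alpha} \div(\rho_\alpha Y)\, d\nu \;=\; \int_{T_\alpha}\!\! \int_P h_\alpha \div(\rho_\alpha Y)\, dvol\, d\theta_\alpha \;=\; -\int_{U_\alpha} \eta(\rho_\alpha Y)\, d\nu. $$
Summing over $\alpha$ and invoking linearity of $\eta$ produces the claimed identity $\int_M \div Y\, d\nu = -\int_M \eta(Y)\, d\nu$.

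The one delicate point, and what I regard as the main obstacle, is verifying that the locally defined $1$-forms $d\log h_\alpha$ really piece together into a single globally defined foliated $1$-form $\eta$. This uses the fact that on an overlap $U_\alpha \cap U_\beta$ the two disintegrations of the same measure $\nu$ force $h_\beta = c_{\alpha\beta}(t)\, h_\alpha$, where $c_{\alpha\beta}$ depends only on the transverse coordinate (it records the Radon--Nikodym derivative of $\theta_\alpha$ with respect to $\theta_\beta$ under the local transverse identification). In particular $c_{\alpha\beta}$ is constant along plaques, so its leaf-direction exterior derivative vanishes and $d\log h_\beta = d\log h_\alpha$ on the overlap. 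Everything else is bookkeeping around applying the divergence theorem on non-compact plaques with compactly supported test fields, which the partition of unity makes possible.
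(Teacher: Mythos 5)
Your argument is correct. Note, however, that the paper does not actually prove this lemma: it is quoted from Candel (the text says it ``appears in \cite[p.210]{Candel2}''), so there is no in-paper proof to compare against. What you have written is the standard argument --- and essentially Candel's: disintegrate $\nu$ in a flow box as $h_\alpha\,dvol\,d\theta_\alpha$, use the leafwise Leibniz identity $\div(h_\alpha Y)=h_\alpha\bigl(\div Y+\eta(Y)\bigr)$, kill the exact term by the divergence theorem on plaques after localizing with a partition of unity, and check that $\eta$ is globally defined because the densities on overlaps differ by a factor constant on plaques. All of these steps are sound, including the reduction $\div Y=\sum_\alpha\div(\rho_\alpha Y)$ via $\sum_\alpha\nabla\rho_\alpha=0$ and the observation that $c_{\alpha\beta}$ is the transverse Radon--Nikodym factor.

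Two small points are worth a sentence each if you want the proof airtight. First, the application of the divergence theorem and of Fubini requires $h_\alpha$ to be smooth along plaques and $\eta(\rho_\alpha Y)$ to be $\nu$-integrable; the former holds because $h_\alpha$ is leafwise harmonic (hence leafwise smooth by elliptic regularity), and the latter follows from the Harnack inequality, which bounds $|\nabla\log h_\alpha|$ uniformly on the compact set $\operatorname{supp}\rho_\alpha$ independently of the transverse parameter. Second, the hypothesis ``$\div Y$ integrable'' is being supplemented implicitly by the assumption that $Y$ is continuous (leafwise $C^1$) so that $\langle\nabla\rho_\alpha,Y\rangle$ is bounded and $\div(\rho_\alpha Y)$ is integrable term by term; for the vector field $Y=\nabla\log|u|$ to which the lemma is applied in the paper this is automatic.
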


Taking $Y=\log |u|$ as above, the term on the left is the Lyapunov exponent of the measure $\nu$. Equation
(\ref{equality_of_Lyapunov_exponents}) translates into
\begin{equation}
\label{equality_of_Lyapunov_exponents_2}
-\int_M \eta(Y)\, d\nu = \int_{T^1\cF} \langle \tilde Y,X \rangle \, d\mu^+.
\end{equation}
(Here $\langle\cdot,\cdot\rangle$ is the leafwise metric on $\cF$ in the term on the left and
the leafwise metric on $T^1\cF$ in the term on the right.)

Considering a partition of unity $\{\varphi_i\}$ subordinated to a foliated atlas on $M$ and the
corresponding
partition of unity $\{\varphi_i\circ\pi\}$ on $T^1\cF$. The proof of equation
\ref{equality_of_Lyapunov_exponents_2}
is reduced to a local computation. So let $E\simeq D\times T$ be a foliated chart on $M$, where $D$ is a
ball in hyperbolic space, and
$\pi^{-1}(E)\simeq D\times T\times S^{n-1}$ its preimage on $T^1\cF$. Notice that the transverse components
of $\nu$
and $\mu^+$ are equal, and therefore equation (\ref{equality_of_Lyapunov_exponents_2}) must actually be an
equation on
each plaque.  Namely, we have to prove that
\begin{equation}
\label{equality_of_Lyapunov_exponents_plaque_level}
 -\int_D \langle Z(x), Y(x)\rangle h(x)\, dx = \int_{D\times S^{n-1}} \langle X, \tilde Y\rangle\, d\mu^+_{D\times S^{n-1}},
\end{equation}
where $h$ is the harmonic density induced by $\nu$ in a plaque of $E$, $\mu^+_{D\times S^{n-1}}$ is
the conditional measure induced by $\mu^+$ on the corresponding plaque in $\pi^{-1}(E)$, and $
Z=\nabla\log h$.

Each plaque in $\pi^{-1}(E)$ is itself foliated by (local)
central-unstable leaves, parametrized by $\xi\in \partial D$. We
define a 1-form which is a foliated form for this central-unstable
foliation by the expression
\begin{equation*}
 \tilde\eta = d\log k(\cdot, \xi)
\end{equation*}
on the leaf corresponding to $\xi$, where $k$ is the Poisson kernel on the $D$. This is a globally defined
1-form
on $T^1\cF$.

\begin{lemm}
 \label{integral_wrt_modular_form}
\begin{equation}
 \int_{T^1\cF} \tilde\eta(\tilde Y)\, d\mu^+ = \int_M \eta(Y)\, d\nu.
\end{equation}
\end{lemm}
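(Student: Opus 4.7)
The plan is to reduce the integral identity to a plaque-level computation in a foliated chart and then invoke the explicit description of $\mu^+$ in terms of the Poisson kernel. Using a partition of unity subordinate to a foliated atlas on $M$, together with the fact (already emphasized in the discussion preceding the lemma) that the transverse components of $\nu$ and $\mu^+$ coincide, the asserted identity is equivalent to showing, on each foliated chart $E\simeq D\times T$ and each plaque $D$, that
$$
\int_{T^1 D} \tilde\eta(\tilde Y)\, d\mu^+_{D} \;=\; \int_D \eta(Y)\, h\, dx,
$$
where $h$ is the plaque-density of $\nu$ and $\mu^+_{D}$ is the corresponding plaque-conditional of $\mu^+$.

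The key input is the explicit form of $\mu^+_{D}$ provided by the Bakhtin--Martinez / Connell--Martinez correspondence of Theorem~\ref{teoMatilde}. Parametrizing $T^1 D\simeq D\times\partial D$ by endpoints of geodesics, so that cu-leaves become the product slices $D\times\{\xi\}$, the plaque-conditional takes the form
$$
d\mu^+_{D} \;=\; k(x,\xi)\, dx \otimes d\hat h(\xi),
$$
where $\hat h$ is the boundary measure on $\partial D$ representing $h$ through the Poisson formula $h(x)=\int_{\partial D} k(x,\xi)\, d\hat h(\xi)$. Projecting out $\xi$ indeed recovers $h(x)\, dx=\nu|_D$, and the cu-leaf density $k(x,\xi)$ is precisely the one whose logarithmic differential defines $\tilde\eta$.

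Since $|u|$ is lifted from a function on $M$, the leafwise gradient $\tilde Y$ of $\log|u|$ on $T^1\cF$ is, at each $(x,v)$, the horizontal lift of $Y(x)$ under the natural Sasaki-type metric. Evaluating the foliated form $\tilde\eta=d\log k(\cdot,\xi)$ against $\tilde Y$ on the cu-leaf indexed by $\xi$ gives
$$
\tilde\eta(\tilde Y)(x,v) \;=\; \bigl\langle \nabla_x \log k(x,\xi),\, Y(x)\bigr\rangle.
$$
The right-hand side of the plaque identity then becomes
$$
\int_{\partial D}\int_D \bigl\langle \nabla_x \log k(x,\xi), Y(x)\bigr\rangle\, k(x,\xi)\, dx\, d\hat h(\xi) \;=\; \int_{\partial D}\int_D \bigl\langle \nabla_x k(x,\xi), Y(x)\bigr\rangle\, dx\, d\hat h(\xi),
$$
and differentiating the Poisson representation under the integral yields $\int_{\partial D}\nabla_x k(x,\xi)\, d\hat h(\xi)=\nabla h(x)$. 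Hence the expression equals
$$
\int_D \langle \nabla h(x), Y(x)\rangle\, dx \;=\; \int_D \langle \nabla\log h, Y\rangle\, h\, dx \;=\; \int_D \eta(Y)\, h\, dx,
$$
which is the left-hand side.

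The main obstacle is the explicit identification $d\mu^+_D=k(x,\xi)\, dx\otimes d\hat h(\xi)$, which rests on the Bakhtin--Martinez correspondence between harmonic and horocycle measures together with the Poisson integral representation of the harmonic densities on the universal cover of a plaque. A secondary but routine point is identifying $\tilde Y$ with the horizontal lift of $Y$ under the Sasaki-type metric on $T^1\cF$ and justifying the interchange of gradient and integral in the Poisson formula, which follows from smoothness of $k(\cdot,\xi)$ in $x$ and compactness of $\partial D$.
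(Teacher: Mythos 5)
Your proposal is correct and follows essentially the same route as the paper: reduction to plaque level in a foliated chart using the coincidence of the transverse components of $\nu$ and $\mu^+$, the identification of the plaque-conditional of $\mu^+$ as $k(x,\xi)\,dx\otimes dm_h(\xi)$ via the Poisson representation, and the same Fubini-plus-differentiation-under-the-integral computation yielding $\int_D\langle\nabla\log h,\nabla\log|u|\rangle\,h\,dx$. The only difference is presentational: you make the appeal to Theorem~\ref{teoMatilde} for the form of $\mu^+_D$ slightly more explicit than the paper does.
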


Equation (\ref{equality_of_Lyapunov_exponents_plaque_level}) follows  from this lemma, and the proof
of the lemma
is   straightforward. Nevertheless, we will write it in full detail.

\begin{dem}
It is enough to prove the lemma in a foliated chart $E\simeq D\times
T$ where $\pi^{-1}(E)\simeq D\times S^{n-1}\times T$. Since
desintegration of the measures $\nu$ in $E$ and $\mu^+$ in
$\pi^{-1}(E)$ yield the same measure on $T$, it is in fact enough to
prove it at plaque level.

Notice that $\tilde Y=\pi^*Y$, and therefore it is constant on the
fibers of $\pi$.

If $h$ is a positive harmonic function on $D$, we call $m_h$ the
measure on $S^{n-1} = \partial D$ which corresponds to $h$ via
Poisson's representation formula. The symbol $\nabla$ will always refer to the
gradient with respect to $x\in D$ with the leafwise metric.
\begin{eqnarray*}
\int_{S^{n-1}}\int_D \tilde\eta (\tilde Y) k(x, \xi)\, dx\, dm_h(\xi)
&=& \int_{S^{n-1}}\int_D \langle \nabla\log k(x,\xi),\nabla\log
|u|(x)\rangle k(x,\xi)\, dx\, dm_h(\xi)\\
&=& \int_D\int_{S^{n-1}} \left \langle \frac{\nabla k(x,\xi)}{k(x,\xi)},
\nabla\log |u|(x)\right \rangle k(x,\xi)\, dm_h(\xi)\, dx\\
&=& \int_D \left\langle \int_{S^{n-1}} \nabla k(x,\xi)dm_h(\xi), \nabla
|u|(x)\right\rangle\, dx\\
&=& \int_D \left\langle \nabla \int_{S^{n-1}} k(x,\xi)dm_h(\xi), \nabla
|u|(x)\right\rangle\, dx\\
&=& \int_D \langle \nabla h(x), \nabla\log |u|(x)\rangle\, dx\\
&=& \int_D \langle \nabla \log h(x), \nabla\log |u|(x)\rangle\, h(x)
dx.
\end{eqnarray*}
\end{dem}

Finally, we will complete the proof of  Theorem 4.

\begin{demt4}
Equation (\ref{equality_of_Lyapunov_exponents_plaque_level}) follows
directly from Lemma \ref{integral_wrt_modular_form} when we observe
that on the central-unstable leaf for the geodesic flow in $D\times
S^{n-1}\simeq T^1D$ corresponding to geodesics born at $\xi\in S^{n-1}$,
$X=-\nabla \log k(\cdot, \xi)$.
\end{demt4}

\subsection{The regular orbits}
We have seen that almost every  positive orbit of the foliated geodesic flow follows one of the SRB
measures corresponding to the harmonic measures. That is, if $\nu_1,\ldots,\nu_k$ are the ergodic harmonic
measures
on $(M,\cF)$, they give rise to $\mu^+_1,\ldots,\mu^+_k$
which are SRB measures for the positive orbits of the foliated geodesic flow. We can follow the same line
of reasoning for the negative orbits of the geodesic flow, thus obtaining measures $\mu^-_1,\ldots,\mu^-_k$
which also project onto the $\nu_i$. In general the the measures $\mu^+_i$ and $\mu_i^-$ do not coincide.
They do so if and only if the harmonic measure $\nu_i$ is {\em totally invariant}, which means that
in each foliated chart it can be expressed as a transverse holonomy-invariant measure times the leaf
volume. This is the content of the following proposition:

\begin{prop}
\label{holonomy_invariant}
If $\mu$ is a measure on $T^1\cF$ which is invariant by the geodesic flow and which has
constant density along both stable and unstable horospheres, then it projects on $M$ as a measure  whose
desintegration in the leaves is the volume in the leaves, implying that this measure is locally the product
of the volume in the leaves by a transverse invariant measure.
\end{prop}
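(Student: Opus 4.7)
The plan is to first show that, inside each leaf of the lamination $\tilde\cF$ of $T^1\cF$, the conditional measures of $\mu$ coincide with a multiple of the Liouville measure of the corresponding hyperbolic leaf. The two conclusions of the proposition (leafwise Lebesgue disintegration, and local product structure with a transverse invariant measure) will then follow by pushing forward along $\pi$ and using the geodesic flow invariance.

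For this first step, I work in a small piece of a single plaque of $\tilde\cF$. Near a point of the plaque, the restrictions of the unstable horosphere foliation, the stable horosphere foliation, and the geodesic flow orbits are smooth, pairwise transverse, and their tangent spaces span the whole tangent space of the plaque, so the plaque is locally diffeomorphic to $W^u_{\text{loc}}\times W^s_{\text{loc}}\times(-\varepsilon,\varepsilon)$. The hypothesis that $\mu$ has constant density along unstable horospheres forces the conditional measure on each $W^u_{\text{loc}}$-slice to be the Riemannian volume $dm^u$; the symmetric hypothesis gives $dm^s$ on the stable slices; geodesic flow invariance produces translation invariance in the time parameter $t$. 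A standard Fubini/uniqueness argument then identifies $\mu$, restricted to the plaque, with a constant multiple of $dm^u\,dm^s\,dt$, which is exactly (a constant multiple of) the Liouville measure on the unit tangent bundle of the hyperbolic leaf containing the plaque.

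Step 2 is the projection. On each leaf $L$, Liouville splits as the product of the Riemannian volume on $L$ with the round measure on the spherical fibers of $\pi\colon T^1L\to L$; hence $\pi_*\mu$ has leafwise disintegration equal to the hyperbolic volume up to a factor depending only on the leaf, which is the first assertion. For the transverse invariance, I exploit the geodesic flow: any transversal $\Sigma$ to $\tilde\cF$ is sent by $g_t$ to another transversal $g_t(\Sigma)$, and the identification $\Sigma\to g_t(\Sigma)$ is by construction the holonomy of $\tilde\cF$ along the geodesic arcs $X_{[0,t]}$. Since $(g_t)_*\mu=\mu$ and the leaf factors already match (by Step 1), the transverse component must be preserved by holonomy along geodesic segments. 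In a hyperbolic leaf every path is homotopic rel endpoints to a geodesic segment (lift to the universal cover $\mathbb{H}^n$ and use the unique geodesic representative), and foliation holonomy only depends on the homotopy class of the path in the leaf. Therefore every element of the holonomy pseudogroup of $\tilde\cF$, and hence of $\cF$ (the transverse structures are identified by $\pi$), is realized by a geodesic holonomy, which proves the invariance.

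The main obstacle I expect is the last step: one has to relate transversals to $\tilde\cF$ in $T^1\cF$ with transversals to $\cF$ in $M$, check that the transverse factor produced by the disintegration is well defined once one absorbs the leafwise constants of proportionality from Step 1 into the transverse measure, and apply the homotopy-to-geodesic argument uniformly to recover every element of the holonomy pseudogroup. The local product computation in Step 1 and the push-forward in Step 2 are, by contrast, essentially routine consequences of the horospherical structure of $T^1L$ and of the standard description of Liouville measure.
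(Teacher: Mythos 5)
Your outline is sound and, modulo one step that is stated too casually, it reproduces the standard argument; note that the paper itself offers no proof here, deferring entirely to \cite{Bonatti-GomezMont} and to Theorem 5.2 of \cite{Connell-Martinez}, where essentially this computation (usually carried out in the boundary parametrization $T^1\tilde L\simeq\partial^2\tilde L\times\RR$ rather than in local product charts) is done. The step you should not call ``routine Fubini'' is the identification of $\mu$ with Liouville on a plaque. The homeomorphism of a plaque with $W^u_{\mathrm{loc}}\times W^s_{\mathrm{loc}}\times(-\varepsilon,\varepsilon)$ does not carry the horospherical volumes to the coordinate Lebesgue measures: the conditional of $\mu$ on the unstable slice through $(u,w)$ is the Riemannian volume of \emph{that} horosphere, which in product coordinates acquires the Jacobian of the stable/central holonomy, and likewise Liouville is not literally $dm^u\,dm^s\,dt$ but that product times a smooth positive density (a Gromov-product factor). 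What is true, and what you need, is the uniqueness statement: a geodesic-flow-invariant measure on $T^1\tilde L$, $\tilde L=\mathbb H^n$, whose conditionals on both strong-unstable and strong-stable horospheres are the horospherical volumes is a scalar multiple of Liouville. This is cleanest in the coordinates $(\xi^-,\xi^+,t)$: invariance gives $\mu=\Lambda\otimes dt$, the unstable condition pins down the conditionals of $\Lambda$ on the slices $\{\xi^-\}\times\partial\mathbb H^n$ (they are the visual densities $|\xi^--\xi^+|^{-2(n-1)}d\xi^+$ up to a function of $\xi^-$), the stable condition does the symmetric thing, and equating the two product expressions forces $\Lambda$ to be the Liouville geodesic current. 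In constant curvature all these densities are explicit and smooth, so your local-product version can be made rigorous, but you must track the Jacobians. Your Step 2 is fine; I would only point out that once you know the leafwise disintegration of $\pi_*\mu$ is the constant-density volume in every chart, holonomy invariance of the transverse factor follows directly from uniqueness of Rokhlin disintegration on chart overlaps, so the detour through realizing every holonomy element by a geodesic arc, while correct (every leafwise path is homotopic rel endpoints to a geodesic segment), is not needed.
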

\begin{demo} That is exactly the same proof as in \cite{Bonatti-GomezMont}, and can also be found in
\cite[Theorem 5.2]{Connell-Martinez}.
\end{demo}

Recall that, according to Birkhoff's theorem, almost every orbit
(for any invariant measure) is regular in the sense that the positive and negative averages exists and are
equal.

\begin{prop}Assume that $\cF$ does not admit any transverse invariant measure. Then the set of regular
point in $T^1\cF$ has Lebesgue measure equal to $0$.
\end{prop}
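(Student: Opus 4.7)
\begin{demo}
The plan is to exploit the time-reversal symmetry of the hypotheses together with Proposition~\ref{holonomy_invariant}. First I would observe that the vector field $-X$ generating the reverse foliated geodesic flow on $T^1\cF$ is again $\cF$-hyperbolic, with the stable and unstable horosphere foliations exchanged, and that all the hypotheses of Theorem~\ref{no_invariant_measure} are preserved under $X\mapsto -X$. Applying that theorem to $-X$ then produces finitely many ergodic u-Gibbs states $\mu^-_1,\ldots,\mu^-_{\ell}$ for $-X$, i.e.\ measures absolutely continuous along the stable horospheres of $X$ with the prescribed density, whose basins (for the reverse flow) cover a full Lebesgue measure subset of $T^1\cF$.

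The second step is a pigeonhole on Birkhoff averages. By Theorem~\ref{no_invariant_measure} for $X$ and for $-X$, Lebesgue almost every $v\in T^1\cF$ belongs simultaneously to the basin of some $\mu^+_i$ (forward average equals $\mu^+_i$) and to the basin of some $\mu^-_j$ (backward average equals $\mu^-_j$). If such a $v$ is moreover regular in the sense of Birkhoff, its forward and backward averages coincide, so $\mu^+_i=\mu^-_j$. Call this common measure $\mu$. Since the leaves have constant curvature $-1$, the unstable Jacobian is constant along unstable horospheres and $\psi^u\equiv 1$; the same applied to $-X$ gives $\psi^s\equiv 1$. Hence $\mu$ has constant density along both the stable and unstable horosphere foliations.

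The third and final step is to derive a contradiction from Proposition~\ref{holonomy_invariant}: any geodesic-flow-invariant measure with constant densities along both horosphere foliations projects on $M$ to a measure which is locally the product of the leafwise volume with a transverse holonomy-invariant measure; since $\mu$ is a probability, this transverse measure is nontrivial, contradicting the assumption that $\cF$ has no transverse invariant measure. Consequently no point that is both Lebesgue-generic and Birkhoff-regular can exist, and since the Lebesgue-generic points are of full measure, the set of regular points in $T^1\cF$ has Lebesgue measure zero. The main obstacle to watch is the symmetric applicability of Theorem~\ref{no_invariant_measure} to $-X$ (which amounts to checking that $-X$ shares all the standing regularity and hyperbolicity assumptions), and the verification that constant leafwise curvature $-1$ forces $\psi^u\equiv\psi^s\equiv 1$, which is exactly the point needed to invoke Proposition~\ref{holonomy_invariant}.
\end{demo}
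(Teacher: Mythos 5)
Your proposal is correct and follows essentially the same argument as the paper: both proofs run the flow backwards to obtain the finitely many $\mu^-_j$, use a pigeonhole on a hypothetical positive-measure set of regular points to force $\mu^+_i=\mu^-_j$ via equality of forward and backward Birkhoff averages, and then contradict the absence of a transverse invariant measure through Proposition~\ref{holonomy_invariant}. Your explicit check that constant curvature $-1$ gives $\psi^u\equiv\psi^s\equiv 1$ is a welcome verification of the hypothesis of that proposition, which the paper leaves implicit.
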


\begin{demo} For each $i$, let $\cB(\mu^+_i)$ the basin of attraction of $\mu^+_i$, and
$\cB(\mu^-_i)$ the set of points whose negative orbit is contained in the basin of attraction of
$\mu^-_i$. Assume that $E\subset \tilde M$ is a positive Lebesgue measure set of regular points of the
foliated geodesic flow.  Up to removing from $E$ a $0$ Lebesgue measure set, one may assume that almost
every $x\in E$ belongs to some $\cB(\mu^+_i)$ and to some
$\cB(\mu^-_j)$. As the positive an negative SRB measures are finitely many, this impies that there are
$i,j$ and a positive Lebesgue measure set $E_0\subset E$ included in $\cB(\mu^+_i)\cap \cB(\mu^-_j)$.
However,
as the points in $E$ (hence in $E_0$) are assumed to be regular, the positive and negative averages
coincide, that is, the integral of every continuous function for $\mu^+_i$ or $\mu^-_j$ are equal, meaning
that $\mu^+_i=\mu^-_j$.  Since $\mu_i^+$ and $\mu^-_j$ project onto $\nu_i$ and $\nu_j$ respectively,
this immediately means that $i=j$.
Therefore $\mu^+_i=\mu^-_i$ is a measure which is invariant by the geodesic
flow, and it has constant density along both stable and unstable horospheres. This,
together with Proposition \ref{holonomy_invariant}, gives a contradiction.
\end{demo}

 \begin{coro} If $\cF$ has no transverse holonomy-invariant measure,
 each of the measures $\mu^+_i$ is singular
 with respect to Lebesgue measure on $T^1\cF$.
\end{coro}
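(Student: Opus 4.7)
\begin{demo}
The plan is to combine Birkhoff's ergodic theorem applied to $\mu^+_i$ with the previous proposition, which asserts that the set of regular points (those with agreeing positive and negative averages of continuous functions) has Lebesgue measure zero.

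First, since $\mu^+_i$ is an ergodic invariant probability for the foliated geodesic flow, Birkhoff's ergodic theorem gives a set $R_i\subset T^1\cF$ of full $\mu^+_i$-measure such that for every $x\in R_i$ and every continuous function $f$ on $T^1\cF$ one has
$$\lim_{T\to +\infty}\frac{1}{T}\int_0^T f(X_t(x))\,dt \;=\; \int f\,d\mu^+_i \;=\;\lim_{T\to +\infty}\frac{1}{T}\int_{-T}^0 f(X_t(x))\,dt.$$
(Applying Birkhoff to a countable dense family of continuous functions provides a single full-measure set $R_i$ that works for all continuous $f$ simultaneously.) In particular the positive and negative averages agree on $R_i$, so $R_i$ is contained in the set of regular orbits considered in the previous proposition. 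By that proposition, since $\cF$ admits no transverse invariant measure, this set of regular points has Lebesgue measure zero, so $R_i$ also has Lebesgue measure zero.

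Now suppose, towards a contradiction, that $\mu^+_i$ is not singular with respect to Lebesgue measure on $T^1\cF$. Write the Lebesgue decomposition $\mu^+_i = \mu^+_{i,\mathrm{ac}} + \mu^+_{i,\mathrm{s}}$, with $\mu^+_{i,\mathrm{ac}}$ a nonzero absolutely continuous measure. Since $\mu^+_i(T^1\cF \setminus R_i)=0$, both components vanish on $T^1\cF\setminus R_i$; in particular $\mu^+_{i,\mathrm{ac}}(R_i) = \mu^+_{i,\mathrm{ac}}(T^1\cF) > 0$. But $R_i$ has Lebesgue measure zero, and $\mu^+_{i,\mathrm{ac}}$ is absolutely continuous with respect to Lebesgue, so $\mu^+_{i,\mathrm{ac}}(R_i)=0$. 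This contradiction shows that $\mu^+_i$ must be singular with respect to Lebesgue measure on $T^1\cF$.

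There is no genuine obstacle here: all the substantive work was carried out in Proposition 7.5 (the Lebesgue-nullness of the regular set when no transverse invariant measure exists), and the conclusion follows by a clean Lebesgue-decomposition argument combined with Birkhoff's theorem.
\end{demo}
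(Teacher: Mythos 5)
Your proof is correct and follows essentially the same route the paper intends: the corollary is stated immediately after the proposition that the regular set has Lebesgue measure zero precisely because, by Birkhoff, $\mu^+_i$ gives full mass to that set, so $\mu^+_i$ is carried by a Lebesgue-null set and is therefore singular. (Your detour through the Lebesgue decomposition is harmless but unnecessary: a measure concentrated on a Lebesgue-null set is singular by definition.)
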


For $x\in M$ and $v\in T^1_x\cF$, let $\ell_{(x,v)}(T)$ be the geodesic segment in the
leaf $\mathcal{L}_x$ centered at $x$ and
directed by $v$. Let $m_{(x,v)}(T)$ the normalized length measure of $\ell_{(x,v)}(T)$, which
is a probability measure on $M$.

\begin{prop}
 If $\cF$ admits only one harmonic measure $\nu$ which is not totally invariant, then for Lebesgue
 almost every $x\in M$ and Lebesgue almost every $v\in T^1_x\cF$, the measures $m_{(x,v)}(T)$
 converge to $\nu$ as $T\to +\infty$.
\end{prop}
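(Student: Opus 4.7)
The plan is to apply the conclusion of Theorem 4 both to the foliated geodesic flow $X$ and to its time-reverse $-X$, and then push the resulting one-sided Birkhoff averages down to $M$ via $\pi$ and combine them. First, since the unique harmonic measure $\nu$ is not totally invariant, $\cF$ admits no transverse holonomy-invariant measure: such a measure, multiplied leafwise by the volume, would produce a totally invariant harmonic measure, which by uniqueness would have to equal $\nu$ --- a contradiction. By Theorem~\ref{teoMatilde}, the push-forward $\pi_*$ realizes a bijection between horocycle measures on $T^1\cF$ invariant by the geodesic flow and harmonic measures on $M$; in the hyperbolic setting, horocycle measures are exactly the $u$-Gibbs states (since $\psi^u\equiv 1$). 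Uniqueness of $\nu$ therefore yields a unique ergodic $u$-Gibbs state $\mu^+$ with $\pi_*\mu^+=\nu$. By part~2 of Theorem~4, $\mu^+$ is an SRB-measure and its basin $\cB(\mu^+)\subset T^1\cF$ has full Lebesgue measure.

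Running the same argument for $-X$ produces a unique ergodic $-X$-$u$-Gibbs state $\mu^-$, again projecting to $\nu$, with a $-X$-basin $\cB^-(\mu^-)$ of full Lebesgue measure in $T^1\cF$. The step I expect to need most care is verifying the negativity $\lambda(\mu^-)<0$ of the normal Lyapunov exponent with respect to $-X$, needed in order to invoke Theorem~4 for $-X$: this requires redoing the computation of Section~7.1 using the stable horocycle disintegration in place of the unstable one, giving $\lambda(\mu^-)=\lambda(\nu)$, and concluding by Deroin--Kleptsyn exactly as for $\mu^+$.

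On the full Lebesgue measure set $E:=\cB(\mu^+)\cap\cB^-(\mu^-)$, both the forward and the backward Birkhoff averages of Dirac masses along an $X$-orbit converge weak-$*$, to $\mu^+$ and $\mu^-$ respectively. Pushing forward by the continuous projection $\pi\colon T^1\cF\to M$, and using $\pi\circ X_t(x,v)=\gamma_{(x,v)}(t)$, one obtains
$$\lim_{T\to+\infty}\frac{1}{T}\int_{0}^{T}\delta_{\gamma(t)}\,dt=\nu=\lim_{T\to+\infty}\frac{1}{T}\int_{-T}^{0}\delta_{\gamma(t)}\,dt.$$
Since $m_{(x,v)}(T)$ is the arithmetic mean of these two probability measures, it too converges weak-$*$ to $\nu$. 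Finally, as the Lebesgue measure on $T^1\cF$ is locally the product of Lebesgue on $M$ and the spherical measure on $T^1_xM$, Fubini converts the full-measure condition on $E$ into the announced statement: for Lebesgue almost every $x\in M$ and Lebesgue almost every $v\in T^1_x\cF$, $m_{(x,v)}(T)\to\nu$ as $T\to+\infty$.
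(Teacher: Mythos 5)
Your proposal is correct and follows essentially the same route as the paper, which merely observes that Lebesgue-almost every orbit distributes according to $\mu^+$ in the future and $\mu^-$ in the past and that such orbits project to geodesics with the desired property. You have simply filled in the details the paper leaves implicit (no transverse invariant measure, uniqueness of the $u$-Gibbs state via Theorem~\ref{teoMatilde}, full-measure basins from Theorem 4, the time-reversed exponent computation, and the final Fubini step), all of which check out.
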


\begin{demo}
If $(M,\cF)$ admits only one harmonic measure $\nu$ which is not totally invariant, Lebesgue
almost every orbit of the geodesic flow in $T^1\cF$ distributes according to $\mu^+$ in the future
and $\mu^-$ in the past. Such orbits project onto geodesics with the desired property.
\end{demo}

\begin{exam} Riccati Equations: Let $S$ be a compact hyperbolic Riemann surface and $\rho:\pi_1(S) \longrightarrow PSL(2,\CC)$
a representation of its fundamental group into the group of Moebius transformations of the Riemann sphere $\PP^1$.
Let $p:M_\rho \longrightarrow S$ be the (ruled) algebraic surface obtained by the suspension of $\rho$ (i.e. $\tilde S \times_{\pi_1(S)} \PP^1$
with the holomorphic foliation $\cF_\rho$ transversal to the ruling obtained by taking the quotient of the trivial foliation
$\cup_{t\in \PP^1}\tilde S \times \{t\}$). It is a Ricatti equation (\cite{Bonatti-GomezMont},\cite{Elifalet}).
Let $T^1S\rightarrow S$ be the unit tangent bundle to $S$ with the induced morphism of fundamental groups
$ p_*:\pi_1(T^1S) \longrightarrow \pi_1(S)$.
The unit tangent bundle $T^1\cF$ is obtained by suspending the representation $\rho \circ p_*:\pi_1(T^1S) \longrightarrow PSL(2,\CC)$.
The geodesic flow and the foliated geodesic flow commute with the projection $\tilde p:T^1\cF \rightarrow T^1S$.
It is shown in \cite{Bonatti-GomezMont} that there are measurable sections $\sigma^\pm:T^1S \rightarrow T^1\cF$
commuting with the geodesic flows, such that $\mu^\pm:=\sigma^\pm(dLiouv_{T^1S})$ are the attractors and repellors (the u-Gibbs states)
of the foliated geodesic flow. If  $\rho$ is the representation of the universal cover of $S$, or another quasi-Fuchsian representation,
the sections are continuous, the limit sets of the discrete groups are circles or quasi-circles, and the support of the u-Gibbs states
are the 3-manifolds $\sigma^\pm( T^1S )$. We have in this case a north to south pole dynamics. If the representation is non-discrete,
so that the limit set of the representation is the Riemann sphere, the general orbit of $\rho(\pi_1(S))$ will be dense in $\PP^1$,
the general leaf of $\cF$ will be dense in $M_\rho$
and  the support of the u-Gibbs state is all of $T^1\cF$. In this last case one has a strange attractor/repellor dynamics.
\end{exam}

\bibliography{Bibliography}{}
\bibliographystyle{plain}

\end{document}